\newtheorem{theorem}{Theorem}
\newtheorem{definition}{Definition}
\newtheorem{lemma}{Lemma}
\newtheorem{proposition}{Proposition}
\date{}
\numberwithin{equation}{section} \numberwithin{theorem}{section}
\numberwithin{lemma}{section} \numberwithin{corollary}{section}
\numberwithin{remark}{section} \numberwithin{proposition}{section}
\numberwithin{definition}{section}
\newcommand{\Div}{\operatorname{div}}
\newcommand{\loc}{\operatorname{loc}}
\newcommand{\osc}{\operatorname{osc}}
\begin{document}

\title{Optimal design problems with \\ fractional
diffusions}

\author{Eduardo V. Teixeira\footnote{Universidade Federal do Cear\'a, Av. Humberto Monte s/n Campus of Pici - Bloco 914, Fortaleza - Cear\'a - Brazil 60.455-760 -- {\tt teixeira@mat.ufc.br}}  $ \ \& $ Rafayel Teymurazyan\footnote{Universidade de Coimbra, Departamento de Matem\'atica, 3001-501 Coimbra, Portugal -- {\tt rafayel@utexas.edu}}}

\maketitle

\begin{abstract}
In this article we study optimization problems ruled by fractional diffusion operators with volume constraints. By means of penalization techniques we prove existence of solutions. We also show that every solution is locally of class $C^{0,\alpha}$ (optimal regularity), and that the free boundary is a $C^{1,\gamma}$ surface, up to a $\mathcal{H}^{n-1}$-negligible set.
\end{abstract}

\section{Introduction}
The goal of this paper is to establish existence and geometric properties of nonlocal optimal design problems. This class of problems arises in the study of best insulation devices. Motivations also come from semi-conductor theory, plasma physics, flame propagations etc. When taking into account eventual turbulence or long-range integrations, the model becomes more accurate when ruled by nonlocal operators, such as $(-\Delta)^\alpha$. In particular, the model becomes sensible to interior changes in the temperature; local versions of the problem can only feel changes on the boundary of the body to be insulated.

Let us recall that the fractional Laplacian $(-\Delta)^\alpha$ is given by
\begin{equation*}
(-\Delta)^\alpha u(x)=C_{n,\alpha}\textrm{PV}\int_{\mathbb{R}^n}\frac{u(x)-u(y)}{|x-y|^{n+2\alpha}}\,dy,
\end{equation*}
where $\textrm{PV}$ is the Cauchy principal value and $C_{n,\alpha}$ is a normalization constant. The free boundary optimization problem we study here takes the following formulation:  given a smooth domain $D \subset \mathbb{R}^n$, a nonnegative function $\varphi \colon D \to \mathbb{R}$,  and a positive number $\omega>0$,   minimize the $\alpha$-energy
\begin{equation}\label{alpha energy}
	 J_\alpha(u):=\int_{\mathbb{R}^n}\int_{\mathbb{R}^n}\frac{|u(x)-u(y)|^2}{|x-y|^{n+2\alpha}}\,dx\,dy
\end{equation}
among competing functions $u$ within the functional set
\begin{equation}\label{set}
	K(\alpha,\omega, \varphi):=\left \{ u\in H^{\alpha}(\mathbb{R}^n) \; \big  |  \; u = \varphi \text{ in } D \text{ and } \mathcal{L}^n(\{u>0\} \cap D^c)=\omega \right \},
\end{equation}
where  $\mathcal{L}^n$ is the $n$-dimensional Lebesgue measure, and $H^{\alpha}(\mathbb{R}^n)$ is the $\alpha$-fractional Sobolev space (see, for example, \cite{DnPV12}), i.e. the set of functions $u$ for which
$$
	 \|u\|_{H^\alpha}=\sqrt{\int_{\mathbb{R}^n}(1+|\xi|^{2\alpha})|\hat{u}(\xi)|^\alpha\,d\xi}<\infty.
$$
By usual methods in the calculus of variations it is hard to perform volume preserving perturbations as to derive energy estimates. In turn, proving existence of a minimizer $u$ as well as regularity properties of $u$ and its free boundary $\partial\{u>0\}\cap D$  is in general difficult tasks from the mathematical view point.

Local versions of the problem have been well studied in the literature,  see  \cite{AAC86, ACS88, BMW06, OT06, T05, T10} among others. A celebrate approach for tackling problems involving volume constrains is based on penalization techniques. The idea is to introduce an artificial parameter in the energy functional which charges for configurations that exceed the volume budget. For fixed values of the penalization parameter, the penalized functional can then be analyzed by free boundary variational methods. Nonlocal free boundary variational tools were first introduced in \cite{CRS10}. Thus, the starting point of this current paper is to obtain regularity results for minimizers of nonlocal free boundary problems with fixed penalized parameter. This requires adjustments of existing free boundary methods from \cite{CRS10}; some extra work is needed though. Of course all analytic and geometric estimates obtained depend upon the penalized parameter, and they blow-up as the penalization term goes to infinity.

The auxiliary penalization problem we consider here takes the following set-up. Fix an $\varepsilon>0$, we define the $\varepsilon$-energy functional
\begin{equation}\label{alpha energy esp}
J_\varepsilon(v):=\int_{\mathbb{R}^n}\int_{\mathbb{R}^n}\frac{|u(x)-u(y)|^2}{|x-y|^{n+2\alpha}}\,dx\,dy+f_\varepsilon \left (\mathcal{L}^n(\{u>0\} \cap D^c) \right ),
\end{equation}
where
$$
f_\varepsilon(s):=
\left\{
\begin{array}{c}
\frac{1}{\varepsilon}(s-\omega)\,\textrm{ for }s\geq\omega,\\
\varepsilon(s-\omega)\,\,\textrm{ for }s\leq\omega.
\end{array}
\right.
$$
Define the functional set
\begin{equation}\label{set w/o vol}
	K:=\left \{u\in H^\alpha(\mathbb{R}^n) \; \big | \;    u=\varphi \textrm{ in }  D \right \},
\end{equation}
and then, the penalized problem becomes
\begin{equation}\label{1.4}
\textrm{find }\,u\in K\,\textrm{ such that }J_\varepsilon(u)=\inf_{v\in K}J_\varepsilon(v).
\end{equation}

As previously mentioned, the minimization problem \eqref{1.4} is similar to the one treated in \cite{CRS10}. However,  {\it there's no such thing as a free lunch} -- the key and in general hard issue, though, is to prove that the aimed volume is attained for small (but still positive) values of the penalization parameter. This is only possible by means of a refined control on the rate between volume decreasing versus energy increasing, of competing shapes. The appropriate tool for such a control is the so-called Hadamard's variational formula, whose local (smooth) version is known for over one-hundred years, \cite{Had}. Hence, one of the main difficulties we handle in this work is to derive a measure-theoretic, $\alpha$-fractional Hadamard's formula for domain variations. This is accomplished in section \ref{sct HF}.

Another difficulty in dealing with nonlocal optimal design problems is the lack of {\it local} information on the unknowns. To bypass this inconvenience, we will make use of the extension property discovered in \cite{CS07}. We consider the upper half-space
$$
	 \mathbb{R}_+^{n+1}=\{(x,y)\in\mathbb{R}^n\times\mathbb{R}_+\},
$$
and set $\beta=1-2\alpha$. For $u\in C^2(\mathbb{R}^n)$ we solve the Dirichlet problem
\begin{eqnarray}\label{1.5}
  -\Div(y^\beta\nabla v)&=&0\,\textrm{ in }\mathbb{R}^{n+1}_+, \\
  v(x,0)&=&u(x).\nonumber
\end{eqnarray}
A solution to such a problem can be obtained by convolution with the Poisson kernel $P_{n,\alpha}(x,y)$ of the operator $\Div(y^\beta\nabla)$ in $\mathbb{R}^{n+1}_+$, see \cite{CS07}. We have
$$
P_{n,\alpha}(x,y)=q_{n,\alpha}\frac{y^{2\alpha}}{(x^2+y^2)^{(n+2\alpha)/2}},
$$
where $q_{n,\alpha}$ is such that $\int P_{n,\alpha}(x,1)\,dx=1$. From \cite{CS07} we also know that
\begin{theorem}\label{t1.1}
We have $(-\Delta)^\alpha u(x)=-\displaystyle{\lim_{y\rightarrow0}y^\beta v_y(x,y)}$.
\end{theorem}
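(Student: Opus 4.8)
The plan is to establish Theorem~\ref{t1.1} directly from the Poisson representation $v(x,y)=\int_{\mathbb{R}^n}P_{n,\alpha}(x-\xi,y)\,u(\xi)\,d\xi$. Since $\int_{\mathbb{R}^n}P_{n,\alpha}(\eta,y)\,d\eta\equiv1$ for every $y>0$, differentiating in $y$ under the integral sign gives $\int_{\mathbb{R}^n}\partial_yP_{n,\alpha}(\eta,y)\,d\eta\equiv0$; hence, subtracting the constant $u(x)$, for each fixed $x$ and $y>0$,
\begin{equation*}
-y^{\beta}v_y(x,y)=\int_{\mathbb{R}^n}K_y(\eta)\,\big(u(x)-u(x-\eta)\big)\,d\eta,\qquad K_y(\eta):=y^{\beta}\,\partial_yP_{n,\alpha}(\eta,y).
\end{equation*}
The differentiation under the integral, and the identity $\partial_y\!\int=\int\partial_y$ used above, are justified by the elementary locally-uniform-in-$y$ integrable bounds on $P_{n,\alpha}$ and $\partial_yP_{n,\alpha}$ that follow from the explicit formula.

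A direct computation from $P_{n,\alpha}(\eta,y)=q_{n,\alpha}\,y^{2\alpha}\,(|\eta|^2+y^2)^{-(n+2\alpha)/2}$ together with $\beta=1-2\alpha$ yields
\begin{equation*}
K_y(\eta)=q_{n,\alpha}\left[\frac{2\alpha}{(|\eta|^2+y^2)^{(n+2\alpha)/2}}-\frac{(n+2\alpha)\,y^2}{(|\eta|^2+y^2)^{(n+2\alpha)/2+1}}\right],
\end{equation*}
where all powers of $y$ have cancelled. Two features are decisive: $K_y$ depends only on $|\eta|$, and it obeys the uniform bound $|K_y(\eta)|\le C\,(|\eta|^2+y^2)^{-(n+2\alpha)/2}$, while for each fixed $\eta\neq0$ one has $K_y(\eta)\to 2\alpha\, q_{n,\alpha}\,|\eta|^{-(n+2\alpha)}$ as $y\to0^+$. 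Granting the passage to the limit, this gives $-y^{\beta}v_y(x,y)\to 2\alpha\, q_{n,\alpha}\,\textrm{PV}\!\int_{\mathbb{R}^n}\frac{u(x)-u(\xi)}{|x-\xi|^{n+2\alpha}}\,d\xi=(-\Delta)^{\alpha}u(x)$, the last equality being exactly the statement that the normalization constant $C_{n,\alpha}$ in the definition of $(-\Delta)^\alpha$ equals $2\alpha\,q_{n,\alpha}$ (equivalently, the two constants are tuned so that the identity holds).

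The only delicate point, and the main obstacle, is to justify interchanging $\lim_{y\to0}$ with the integral near the singular point $\eta=0$, where neither $K_y$ nor its pointwise limit is integrable against a generic bounded function: this is precisely where the principal value is produced. I would split the integral over $\{|\eta|\ge1\}$ and $\{|\eta|<1\}$. On the far part, $|K_y(\eta)\,(u(x)-u(x-\eta))|\le C\,|\eta|^{-(n+2\alpha)}\,|u(x)-u(x-\eta)|$, which is integrable and $y$-independent for $u$ bounded --- the natural hypothesis, the general case $u\in H^{\alpha}\cap C^2$ being recovered by the density arguments of \cite{CS07} --- so dominated convergence applies. On the near part, one uses that $K_y$ is radial: writing $u(x-\eta)=u(x)-\nabla u(x)\cdot\eta+O(|\eta|^2)$, the first-order term integrates to zero against $K_y$ and the remainder is controlled by $C\,(|\eta|^2+y^2)^{-(n+2\alpha)/2}\,|\eta|^2\le C\,|\eta|^{2-n-2\alpha}$, which is integrable on $\{|\eta|<1\}$ uniformly in $y$; dominated convergence then delivers the claimed principal-value limit.

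As an independent check one can work on the Fourier side in $x$: the decaying solution of \eqref{1.5} is $\hat v(\xi,y)=\hat u(\xi)\,\phi(|\xi|y)$, where $\phi$ is the bounded solution of the Bessel-type equation $\phi''+\beta\, t^{-1}\phi'-\phi=0$ with $\phi(0)=1$; its Frobenius expansion at $t=0$ reads $\phi(t)=1-\tfrac{C_{n,\alpha}}{2\alpha}\,t^{2\alpha}+o(t^{2\alpha})$, whence $-y^{\beta}\hat v_y(\xi,y)=-|\xi|^{2\alpha}\hat u(\xi)\,\big[t^{\beta}\phi'(t)\big]_{t=|\xi|y}\to C_{n,\alpha}\,|\xi|^{2\alpha}\hat u(\xi)=\widehat{(-\Delta)^{\alpha}u}(\xi)$, confirming both the identity and the value of the constant.
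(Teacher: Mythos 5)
The paper does not prove Theorem~\ref{t1.1} at all: it is imported verbatim from \cite{CS07} with no argument given. What you have written is therefore not a rival proof of the paper's argument but a self-contained derivation of the cited fact, and it is correct. The computation of $K_y(\eta)=y^{\beta}\partial_yP_{n,\alpha}(\eta,y)$ checks out (with $\beta=1-2\alpha$ the powers of $y$ cancel exactly as you claim, leaving a term $\to 2\alpha q_{n,\alpha}|\eta|^{-(n+2\alpha)}$ and a term $O\big(y^2(|\eta|^2+y^2)^{-(n+2\alpha)/2-1}\big)$ that vanishes pointwise and is uniformly dominated), the subtraction of $u(x)$ via $\int P_{n,\alpha}(\cdot,y)\,d\eta\equiv1$ is the right trick, and the near/far splitting with the radial-symmetry cancellation of the first-order Taylor term is exactly what makes the principal value emerge as an honest Lebesgue integral of the symmetrized integrand. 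This is, in essence, the Poisson-kernel proof given in \cite{CS07} itself, and the Fourier-side verification you append (indicial roots $0$ and $2\alpha$ of the Bessel-type ODE, giving $t^{\beta}\phi'(t)\to\text{const}$) is the other argument sketched there; both are standard and consistent.

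Two small remarks. First, you correctly flag that $C_{n,\alpha}=2\alpha q_{n,\alpha}$ is a normalization matching rather than something to be derived; the paper leaves $C_{n,\alpha}$ unspecified, so this is the best one can say. Second, the closing appeal to ``density arguments'' to pass from bounded $C^2$ data to general $u\in H^\alpha\cap C^2$ is a little glib --- pointwise identities do not pass through $H^\alpha$-density painlessly --- but the paper itself only invokes the theorem for $u\in C^2(\mathbb{R}^n)$ in the range where the Poisson integral converges, so the core argument you give already covers what is needed.
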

Because of the divergence form of the elliptic operator in \eqref{1.5}, a Dirichlet integral is available. We also observe that if $u$ solves \eqref{1.5} and $\partial_y u = 0$ on the hyperplane $\{y=0\}$, one can extend it evenly across  $\{y=0\}$, and the new equation satisfied by $u$ is
\begin{eqnarray}\label{1.6}
  -\Div(|y|^\beta\nabla v)&=&0\,\textrm{ in }\mathbb{R}^{n+1},\nonumber \\
  v(x,0)&=&u(x).\nonumber
\end{eqnarray}
For any open subset $\Omega$ of $\mathbb{R}^{n+1}$, we introduce the weighted Hilbert space
$$
H^1(\beta,\Omega):=\big\{u\in L^2(\Omega_+);\, |y|^{\beta/2}\nabla u\in L^2(\Omega)\big\},
$$
where $\Omega_+=\Omega\cap\mathbb{R}_+^{n+1}$. We set
\begin{equation} \label{I esp}
	 I_\varepsilon(u,\Omega):=\int_\Omega|y|^\beta|\nabla u|^2\,dx\,dy+f_\varepsilon(\mathcal{L}^n(\{u>0\}\cap\mathbb{R}^n\cap\Omega)),\,\,u\in H^1(\beta,\Omega).
\end{equation}
The study of penalized problem \eqref{1.4} is now replaced by the study of local minimizers of $I_\varepsilon$, i.e. functions $u$ that are in $H^1(\beta,B_1)$ and satisfy
\begin{equation}\label{1.7}
\forall B\subset B_1,\,\,\forall v\in H^1(\beta,B)\,\textrm{ with }\,v=u\,\textrm{ on }\,\partial B,\,\,\,\,\,I_\varepsilon(u,B)\leq I_\varepsilon(v,B).
\end{equation}

The paper is organized as follows: in section \ref{stc P} we list few analytic and geometric properties we will need for the study of the (fixed) penalized functional \eqref{I esp}. Still in section \ref{stc P} we show that a minimum of $I_\varepsilon$ is $\alpha$-H\"older continuous, which corresponds to the optimal regularity for the free boundary problem. We also show that it is non-degenerate. The constants though do depend on the penalized parameter $\varepsilon$ and they blow-up as $\varepsilon \to 0$. In section \ref{sct FPS} we obtain measure estimates on the free boundary. In section \ref{sct HF} we derive a fractional Hadamard's variational formula and in section \ref{sct Rec} we prove that for $\varepsilon > 0$ small enough -- but still positive -- the volume constraint is verified. This finally provides the existence of a minimum for the fractional optimization problem with volume constraint.

\section{Preliminaries}\label{stc P}

We start off by listing few properties we will need along the article. Initially, let us set, for the sake of notations, that a minimizer of \eqref{1.7} will be denoted by $u$ instead of $u_\varepsilon$. Here and afterwards by $B_r(x,y)$ we denote the ball in $\mathbb{R}^{n+1}$ centered at $(x,y)$ and of radius $r$. When $x=0$, we simply write $B_r$. We will also write $B_r^n(x)$ for the ball in $\mathbb{R}^n$ centered at $x$ with radius $r$. We now prove existence of minimizers for the $\varepsilon$-penalized problem.
\begin{lemma}\label{l2.1}
Given a smooth boundary datum, $\varphi(x,y)$ defined on $\partial B_1\cap\mathbb{R}_+^{n+1}$, problem \eqref{1.7} has an absolute minimum $u$.
\end{lemma}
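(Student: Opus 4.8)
The strategy is the classical direct method in the calculus of variations, adapted to the degenerate-weighted setting. First I would show that the admissible class is nonempty and that the infimum is finite: extend $\varphi$ to an $H^1(\beta, B_1)$ function (for instance by the $y^\beta$-harmonic extension of its trace on $\{y=0\}$, or simply by a smooth extension since $\varphi$ is given smooth on $\partial B_1 \cap \mathbb{R}^{n+1}_+$), which yields a finite value of $I_\varepsilon$ because $f_\varepsilon$ is Lipschitz (hence bounded on bounded sets) and $\mathcal{L}^n(\{u>0\}\cap\mathbb{R}^n\cap B_1) \le \mathcal{L}^n(B_1^n)$ is a priori bounded. Thus $m := \inf I_\varepsilon \in \mathbb{R}$, and in fact $m \ge f_\varepsilon(0) > -\infty$ since the Dirichlet term is nonnegative and $f_\varepsilon$ is nondecreasing with $f_\varepsilon(s)\ge f_\varepsilon(0) = -\varepsilon\omega$ for $s \ge 0$.

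Next, take a minimizing sequence $\{u_k\} \subset H^1(\beta, B_1)$ with $u_k = \varphi$ on $\partial B_1 \cap \mathbb{R}^{n+1}_+$ and $I_\varepsilon(u_k, B_1) \to m$. The penalization term being bounded below, the weighted Dirichlet integrals $\int_{B_1}|y|^\beta|\nabla u_k|^2$ are uniformly bounded; together with the fixed boundary data this gives a uniform bound on $\|u_k\|_{H^1(\beta,B_1)}$ via a weighted Poincaré inequality in the Muckenhoupt-$A_2$ weight $|y|^\beta$ (valid since $\beta = 1-2\alpha \in (-1,1)$). Hence, up to a subsequence, $u_k \rightharpoonup u$ weakly in $H^1(\beta, B_1)$, $u_k \to u$ strongly in $L^2(B_1, |y|^\beta\,dx\,dy)$, and — by compactness of the trace operator onto $\{y=0\}$ in this weighted space — $u_k \to u$ in $L^2_{\loc}(\mathbb{R}^n \cap B_1)$ and a.e. on $\mathbb{R}^n \cap B_1$; the boundary condition $u = \varphi$ on $\partial B_1 \cap \mathbb{R}^{n+1}_+$ passes to the limit since the affine subspace of functions with the prescribed trace is weakly closed.

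It then remains to pass to the limit in $I_\varepsilon$. The weighted Dirichlet functional $v \mapsto \int_{B_1}|y|^\beta|\nabla v|^2$ is convex and strongly continuous, hence weakly lower semicontinuous, so $\int_{B_1}|y|^\beta|\nabla u|^2 \le \liminf_k \int_{B_1}|y|^\beta|\nabla u_k|^2$. For the penalization term, a.e. convergence $u_k \to u$ on $\mathbb{R}^n\cap B_1$ gives $\mathbf{1}_{\{u>0\}} \le \liminf_k \mathbf{1}_{\{u_k>0\}}$ pointwise a.e. on $\mathbb{R}^n \cap B_1$, so by Fatou $\mathcal{L}^n(\{u>0\}\cap\mathbb{R}^n\cap B_1) \le \liminf_k \mathcal{L}^n(\{u_k>0\}\cap\mathbb{R}^n\cap B_1)$; since $f_\varepsilon$ is nondecreasing and continuous, $f_\varepsilon(\mathcal{L}^n(\{u>0\}\cap\mathbb{R}^n\cap B_1)) \le \liminf_k f_\varepsilon(\mathcal{L}^n(\{u_k>0\}\cap\mathbb{R}^n\cap B_1))$. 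Adding the two inequalities gives $I_\varepsilon(u, B_1) \le m$, whence $u$ is an absolute minimizer.

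\textbf{Main obstacle.} The delicate point is the lower semicontinuity of the measure term: $u \mapsto \mathcal{L}^n(\{u>0\}\cap\mathbb{R}^n\cap B_1)$ is only lower semicontinuous, not continuous, under the convergence at hand, and crucially this requires that $u_k$ converge along a subsequence \emph{pointwise a.e. on the hyperplane} $\{y=0\}$, not merely in the bulk. I would therefore be careful to invoke a trace/compactness result for the weighted space $H^1(\beta, B_1)$ — the weight $|y|^\beta$ is $A_2$, so the Caffarelli–Silvestre trace theory applies — to secure a.e. convergence of the traces; the rest is routine. (The fact that $f_\varepsilon$ is nondecreasing is what makes lower semicontinuity of the \emph{volume} suffice; if the sign of the penalization were reversed one would instead need upper semicontinuity, which fails, underscoring why the particular form of $f_\varepsilon$ matters.)
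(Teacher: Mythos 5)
Your proposal is correct and follows essentially the same approach as the paper: the direct method of the calculus of variations, with lower semicontinuity of the weighted Dirichlet integral, Fatou-type lower semicontinuity of the measure of the positivity set along an a.e.-convergent subsequence of traces, and the monotonicity plus continuity of $f_\varepsilon$ to conclude. The only cosmetic difference is in how the a.e.\ convergence on $\{y=0\}$ is secured: the paper observes that the minimizing sequence is bounded in the trace space $H^\alpha(B_1)$ and then uses the compact embedding $H^\alpha\hookrightarrow L^{2n/(n-2\alpha)-\epsilon}$, whereas you phrase it as trace compactness for $H^1(\beta,B_1)$ in the $A_2$-weighted setting; these are the same fact (the trace of $H^1(\beta,\cdot)$ on $\{y=0\}$ is $H^\alpha$), and you are, if anything, a bit more explicit than the paper about why this step is the crux of the lower-semicontinuity argument.
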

\begin{proof}
Take $u_0$ with $\mathcal{L}^n(\{u_0>0\})\leq\omega$, then $I_\varepsilon(u_0)\leq C$ (uniformly in $\varepsilon$), also $I_\varepsilon\geq-\omega$. Therefore, there exists a minimizing sequence $\{u_k\}_{k\in\mathbb{N}}$. The sequence is bounded in $H^\alpha(B_1)$ and, because $H^\alpha$ is continuously embedded into $L^{2n/(n-2\alpha)}$, the sequence $\{u_k\}_{k\in\mathbb{N}}$ converges (up to a subsequence) to a function $u$ strongly in $L^{2n/(n-2\alpha) - \epsilon}$, for any $\epsilon >0$, and almost everywhere in $\mathbb{R}^n$. Thus,
$$
\mathcal{L}^n(\{u>0\})\leq\liminf_{k\rightarrow\infty}\mathcal{L}^n(\{u_k>0\})
$$
and
$$
\int_{B_1}|y|^\beta|\nabla u|^2\leq\liminf_{k\rightarrow\infty}\int_{B_1}|y|^\beta|\nabla u_k|^2.
$$
Hence, $u\in H^1(\beta,B_1)$ and since $f_\varepsilon$ is a continuous and increasing function, one has
$$
I_\varepsilon(u)\leq\liminf_{k\rightarrow\infty}I_\varepsilon(u_k)=\inf_{v\in H^1(\beta,B_1)} I_\varepsilon(v).
$$
Therefore $u$ is an absolute minimizer of $I_\varepsilon$ in $H^1(\beta,B_1)$.
\end{proof}


For the behavior of a minimizer in its positivity set, we state the following proposition and refer the reader for its proof to Proposition 3.1 of \cite{CRS10}.

\begin{proposition}\label{p1.1}
Let $u$ be a local minimizer in \eqref{1.7}, and $x_0\in\mathbb{R}^n$ be such that $u(x_0,0)>0$. Then
$$
\lim_{y\rightarrow0}|y|^\beta\nabla u(x_0,y)=0.
$$
Moreover, if $u$ is defined in $\mathbb{R}^{n+1}$, is positive outside the hyperplane $\{y=0\}$ and satisfies $\Div(|y|^\beta\nabla u)=0$ in its positivity set, together with the estimate $u(x,y)=O(|(x,y)|^\alpha)$, then $(-\Delta)^\alpha u(\cdot,0)=0$ in $\mathbb{R}^n\cap\{u>0\}$.
\end{proposition}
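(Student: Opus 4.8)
Both statements are local around a point $x_0$ of the hyperplane $\{y=0\}$ at which the trace of $u$ is positive, and both rest on the same principle: near such a point $u$ is a weak solution of $\Div(|y|^{\beta}\nabla)=0$ in a \emph{full} ball of $\mathbb R^{n+1}$, and for such solutions the extension identity of Theorem~\ref{t1.1}, together with the De Giorgi--Nash--Moser theory for the Muckenhoupt $A_2$ weight $|y|^{\beta}$, forces the weighted normal flux $\lim_{y\to0}y^{\beta}u_y$ to vanish. In the minimizer case the full-ball harmonicity is extracted from the variational inequality; in the second case it is assumed on the positivity set.

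\emph{First assertion.} A local minimizer of $I_{\varepsilon}$ is continuous (a Campanato-type argument: comparing $u$ with its $|y|^{\beta}$-harmonic replacement in a ball $B_{\rho}$ and using that $f_{\varepsilon}$ is Lipschitz while the positivity set changes by at most $\mathcal L^{n}(B_{\rho}\cap\mathbb R^{n})\le c\rho^{n}$ yields quasi-minimality for the Dirichlet energy; cf.\ \cite{CRS10}). Hence $u(x_0,0)>0$ gives $u\ge c_0>0$ on $B^{n}_{\rho}(x_0)$ for some $\rho>0$. For $\phi\in C_{c}^{\infty}(B_{\rho}(x_0,0))$ and $|t|<c_0/\|\phi\|_{\infty}$ the competitor $v=u+t\phi$ agrees with $u$ on $\partial B_{\rho}(x_0,0)$ and satisfies $\{v>0\}\cap\mathbb R^{n}\cap B_{\rho}(x_0,0)=B^{n}_{\rho}(x_0)=\{u>0\}\cap\mathbb R^{n}\cap B_{\rho}(x_0,0)$, so the penalization term in $I_{\varepsilon}$ is frozen and \eqref{1.7} reduces to $\int_{B_{\rho}}|y|^{\beta}|\nabla u|^{2}\le\int_{B_{\rho}}|y|^{\beta}|\nabla(u+t\phi)|^{2}$ for all small $t$ of either sign. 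Letting $t\to0^{\pm}$ gives $\int_{B_{\rho}(x_0,0)}|y|^{\beta}\nabla u\cdot\nabla\phi=0$ for every such $\phi$, i.e.\ $\Div(|y|^{\beta}\nabla u)=0$ weakly in the full ball $B_{\rho}(x_0,0)$. Recalling that $u$ is even across $\{y=0\}$ (cf.\ \eqref{1.6}), the $A_2$-regularity theory yields $u\in C^{0,\sigma}_{\loc}$ together with $u_{x_i},\,u_{x_ix_j}\in C^{0,\sigma}_{\loc}$, and integrating $\partial_{y}(y^{\beta}u_y)=-y^{\beta}\Delta_{x}u$ from $0$ shows that $g(x):=\lim_{y\to0^{+}}y^{\beta}u_y(x,y)$ exists and is continuous. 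Being a weak solution \emph{across} $\{y=0\}$, the two conormal derivatives of $u$ match, while evenness makes them opposite; hence $g\equiv0$ on $B^{n}_{\rho/2}(x_0)$. The tangential part of $\lim_{y\to0}|y|^{\beta}\nabla u(x_0,y)$ is then controlled by the interior gradient bound for $\nabla_{x}u$. This proves the first assertion and, through Theorem~\ref{t1.1}, that $(-\Delta)^{\alpha}u(\cdot,0)=0$ at such $x_0$.

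\emph{Second assertion.} Fix $x_0$ with $u(x_0,0)>0$; by continuity $u>0$ on some ball $B_{\rho}(x_0,0)\subset\mathbb R^{n+1}$, and since the equation holds on $\{u>0\}\supset B_{\rho}(x_0,0)$, $u$ is a weak solution in that full ball, so as above the two conormal limits $\lim_{y\to0^{\pm}}|y|^{\beta}u_y$ exist on $B^{n}_{\rho/2}(x_0)$ and coincide. It remains to identify $u|_{\mathbb R^{n+1}_{+}}$ with the Caffarelli--Silvestre extension $v$ of its trace $u(\cdot,0)$: indeed $u|_{\mathbb R^{n+1}_{+}}$ is $|y|^{\beta}$-harmonic in the open upper half-space (because $\{u>0\}\supset\mathbb R^{n+1}_{+}$), has trace $u(\cdot,0)$, and is $O(|z|^{\alpha})$; the difference $w:=u|_{\mathbb R^{n+1}_{+}}-v$ is then $|y|^{\beta}$-harmonic, $O(|z|^{\alpha})$ and has zero trace, so its odd reflection across $\{y=0\}$ is $|y|^{\beta}$-harmonic in all of $\mathbb R^{n+1}$ and sublinear, hence constant by the Liouville theorem for the $A_2$ weight $|y|^{\beta}$ and therefore $\equiv0$; thus $u=v$ on $\mathbb R^{n+1}_{+}$ and, symmetrically, $u(x,y)=v(x,-y)$ on $\mathbb R^{n+1}_{-}$. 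Consequently $\lim_{y\to0^{+}}y^{\beta}u_y=\lim_{y\to0^{-}}|y|^{\beta}u_y=-\lim_{y\to0^{+}}y^{\beta}v_y=-\lim_{y\to0^{+}}y^{\beta}u_y$, so this common limit is $0$, and Theorem~\ref{t1.1} gives $(-\Delta)^{\alpha}u(x_0,0)=-\lim_{y\to0^{+}}y^{\beta}u_y(x_0,y)=0$. As $x_0\in\mathbb R^{n}\cap\{u>0\}$ was arbitrary, this is the claim.

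\emph{Main obstacle.} The delicate point throughout is the analysis of the degenerate/singular operator $\Div(|y|^{\beta}\nabla)$ near $\{y=0\}$: one must know that its weak solutions admit a well-defined, Hölder-continuous weighted conormal derivative $\lim_{y\to0}y^{\beta}u_y$, that boundary integration by parts against the weight is legitimate, and --- for the second part --- that the $O(|z|^{\alpha})$ growth forces the Liouville rigidity needed to recognize $u$ as the genuine extension of its trace. These are exactly the facts supplied by the Caffarelli--Silvestre extension and the Fabes--Kenig--Serapioni regularity theory for $A_2$ weights, which is why the machinery \eqref{1.5}--\eqref{1.6} and Theorem~\ref{t1.1} was set up; once they are available the manipulations above are routine. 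Alternatively, one may invoke Proposition~3.1 of \cite{CRS10}, of which the statement is a direct adaptation.
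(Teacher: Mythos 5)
The paper offers no proof of this proposition, referring the reader entirely to Proposition~3.1 of \cite{CRS10}; your write-up is a faithful reconstruction of the argument behind that reference, and you yourself note this at the end. The strategy is the standard one: freeze the measure term by working in a full ball $B_\rho(x_0,0)$ where the trace is bounded away from zero, so that small two-sided perturbations cannot alter the positivity set; extract the interior Euler--Lagrange equation $\Div(|y|^\beta\nabla u)=0$ in that ball; and match the two conormal limits across $\{y=0\}$ via the extension/reflection structure of \cite{CS07} (together with a Liouville-type identification of $u$ with the genuine extension of its trace for the second assertion). This is the same approach the cited reference takes, simply spelled out.

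One caveat concerning the first assertion as literally written. What your conormal-matching argument delivers is $\lim_{y\to0}|y|^\beta\partial_y u(x_0,y)=0$, which is precisely the normal-flux quantity appearing in Theorem~\ref{t1.1} and is the only part used elsewhere in the paper. Your parenthetical remark that the tangential component $|y|^\beta\nabla_x u(x_0,y)$ also tends to zero by ``interior gradient bounds'' is correct only when $\beta\ge0$, i.e.\ $\alpha\le\tfrac12$. For $\beta<0$ the weight $|y|^\beta$ blows up near $\{y=0\}$, while $\nabla_x u(x_0,0)$ is finite and generically nonzero (indeed $u_{x_i}$ solves the same equation and is H\"older continuous up to the hyperplane), so $|y|^\beta\nabla_x u(x_0,y)$ does not tend to zero in general. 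This imprecision is inherited from the statement itself (and its transcription from \cite{CRS10}, where the limit is stated for the normal derivative); what you actually prove is the conormal-derivative version, and that is what is needed.
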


We now turn our attention to optimal H\"older estimates for minimizers. In the proof of the following theorem we will use the characterization of H\"{o}lder functions, \cite{M66}: given $\alpha\in(0,1)$, if $B$ is a ball in $\mathbb{R}^{n+1}$, and if there are $C>0$ and $p\in(1,n+1)$ such that
\begin{equation}\label{2.1}
\forall x\in B,\,\forall r<\textrm{dist}(x,\partial B),\,\int_{B_r(x)}|\nabla u|^p\leq Cr^{n+1-p+p\alpha},
\end{equation}
then $u\in C^{0,\alpha}(B)$.

\begin{theorem}[Optimal regularity]\label{t2.1}
If $u$ is a local minimizer of $I_\varepsilon$ posed in $B_1$, then $u\in C^{0,\alpha}_{\loc}(B_1)$.
\end{theorem}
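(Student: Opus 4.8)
The plan is to verify the integral growth condition \eqref{2.1} with $p=2$, which by the cited characterization from \cite{M66} forces $u\in C^{0,\alpha}_{\loc}(B_1)$. Concretely, I would fix a ball $B_r(x_0,y_0)\subset B_1$ and compare the minimizer $u$ with the $|y|^\beta$-harmonic replacement $v$ in $B_r(x_0,y_0)$, i.e. the unique function in $H^1(\beta,B_r)$ that satisfies $\Div(|y|^\beta\nabla v)=0$ in $B_r$ and agrees with $u$ on $\partial B_r$. Since $v=u$ on $\partial B_r$, minimality \eqref{1.7} gives
\begin{equation*}
	\int_{B_r}|y|^\beta|\nabla u|^2 + f_\varepsilon(\mathcal{L}^n(\{u>0\}\cap\mathbb{R}^n\cap B_r)) \le \int_{B_r}|y|^\beta|\nabla v|^2 + f_\varepsilon(\mathcal{L}^n(\{v>0\}\cap\mathbb{R}^n\cap B_r)).
\end{equation*}
Because $f_\varepsilon$ is Lipschitz with constant $1/\varepsilon$, the difference of the two penalization terms is bounded by $\tfrac{1}{\varepsilon}\mathcal{L}^n(B_r\cap\mathbb{R}^n)\le C(\varepsilon) r^n$. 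Using the orthogonality $\int_{B_r}|y|^\beta\nabla v\cdot\nabla(u-v)=0$ (valid since $u-v\in H^1_0(\beta,B_r)$), the left-hand energy term splits as $\int_{B_r}|y|^\beta|\nabla v|^2 + \int_{B_r}|y|^\beta|\nabla(u-v)|^2$, so we arrive at the key Caccioppoli-type inequality
\begin{equation*}
	\int_{B_r}|y|^\beta|\nabla(u-v)|^2 \le C(\varepsilon)\, r^n.
\end{equation*}

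Next I would use the regularity theory for the degenerate operator $\Div(|y|^\beta\nabla\cdot)$ with the Muckenhoupt $A_2$ weight $|y|^\beta$ (Fabes--Kenig--Serapioni): solutions $v$ are locally H\"older continuous and, more importantly, enjoy the energy decay estimate
\begin{equation*}
	\int_{B_\rho}|y|^\beta|\nabla v|^2 \le C\Big(\frac{\rho}{r}\Big)^{n+1+\beta}\int_{B_r}|y|^\beta|\nabla v|^2 \qquad (0<\rho<r),
\end{equation*}
since $v$ is in particular bounded and its gradient-squared measure $|y|^\beta|\nabla v|^2\,dx\,dy$ has at worst the scaling of the ambient dimension with weight. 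Combining this decay for $v$ with the comparison bound for $u-v$ and summing (a standard iteration lemma for quantities of the form $\phi(\rho)=\int_{B_\rho}|y|^\beta|\nabla u|^2$), one obtains $\int_{B_\rho}|y|^\beta|\nabla u|^2\le C(\varepsilon)\rho^{n-\delta}$ for the natural exponent, and then the final step is to absorb the weight: on a ball $B_\rho(x_0,y_0)$ one has $|y|^\beta\ge c\rho^\beta$ only when $|y_0|\gtrsim\rho$, while for balls touching $\{y=0\}$ one uses the evenness of $u$ across $\{y=0\}$ (Proposition \ref{p1.1} guarantees $\lim_{y\to0}|y|^\beta\nabla u=0$ on the positivity set, so the even extension solves the degenerate equation there) together with the fact that $|y|^\beta$ is comparable to its average on such balls. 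Either way one extracts $\int_{B_\rho}|\nabla u|^2\le C(\varepsilon)\rho^{n+1-2+2\alpha}$, which is exactly \eqref{2.1} with $p=2$.

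The main obstacle I anticipate is the handling of the degenerate weight $|y|^\beta$ near the hyperplane $\{y=0\}$: the decay estimate for $|y|^\beta$-harmonic functions and the passage from the weighted integral bound to the unweighted one in \eqref{2.1} both require care, because the exponent bookkeeping is exactly tight — the gain $2\alpha$ in the H\"older exponent comes from the precise value $\beta=1-2\alpha$, and any slack in the comparison argument would only yield some $C^{0,\alpha'}$ with $\alpha'<\alpha$ rather than the claimed optimal $\alpha$. In particular one must know that the even reflection of $u$ across $\{y=0\}$ is an admissible competitor/subsolution in the right sense, which is where Proposition \ref{p1.1} and the structure of local minimizers of $I_\varepsilon$ enter; getting $v$'s decay at the scale-invariant rate $(\rho/r)^{n+1+\beta}$ rather than a worse rate, and then checking that subtracting off the $f_\varepsilon$ contribution does not destroy it, is the crux. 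A secondary technical point is ensuring that $u$ is bounded to begin with (needed for the comparison estimates to have the stated form), which follows from a separate De Giorgi-type argument for $I_\varepsilon$-minimizers, or can be arranged by truncation since truncating a competitor can only decrease both the Dirichlet energy and the positivity volume.
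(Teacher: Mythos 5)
Your argument coincides with the paper's up to the weighted Morrey-type estimate $\int_{B_r}|y|^\beta|\nabla u|^2\le Cr^n$: same harmonic replacement, same Lipschitz bound on $f_\varepsilon$, same orthogonality, same decay rate $(\rho/r)^{n+1+\beta}$ from Theorem 2.6 of \cite{CRS10}, same iteration. The divergence — and the gap — is in the final weight-absorption step.

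You propose to verify \eqref{2.1} with $p=2$ uniformly in $\alpha$. That requires passing from $\int_{B_\rho}|y|^\beta|\nabla u|^2$ to $\int_{B_\rho}|\nabla u|^2$, i.e. a pointwise lower bound $|y|^\beta\ge c\rho^\beta$ on $B_\rho$. Such a bound holds on every ball if and only if $\beta\le 0$, that is $\alpha\ge 1/2$. When $\alpha<1/2$ one has $\beta=1-2\alpha>0$, the weight $|y|^\beta$ vanishes on $\{y=0\}$, and on any ball touching the hyperplane there is no positive lower bound for $|y|^\beta$; the inequality $\int_{B_\rho}|\nabla u|^2\le\rho^{-\beta}\int_{B_\rho}|y|^\beta|\nabla u|^2$ simply goes the wrong way. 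Your attempted repair — evenness of $u$ across $\{y=0\}$ plus ``$|y|^\beta$ comparable to its average'' — does not close this: the $A_2$/doubling property gives comparability of \emph{averages} over comparable balls, not the pointwise lower bound you need, and near the free boundary $u$ is not a solution of the degenerate equation in a full neighbourhood, so solution estimates do not apply there either.

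The paper sidesteps this by splitting cases. For $\alpha\le 1/2$ (so $\beta\ge 0$) it verifies \eqref{2.1} with $p=1$ via Cauchy--Schwarz against the dual weight,
\begin{equation*}
\int_{B_r}|\nabla u|\le\Bigl(\int_{B_r}|y|^{-\beta}\Bigr)^{1/2}\Bigl(\int_{B_r}|y|^\beta|\nabla u|^2\Bigr)^{1/2}\le Cr^{(n+1-\beta)/2}\cdot Cr^{n/2}=Cr^{n+\alpha},
\end{equation*}
which is exactly \eqref{2.1} with $p=1$ and is legitimate because $-\beta=2\alpha-1>-1$ makes $|y|^{-\beta}$ locally integrable with $\int_{B_r}|y|^{-\beta}\sim r^{n+1-\beta}$. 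Only for $\alpha>1/2$ (so $\beta<0$) does the paper use your $p=2$ step, where $|y|^\beta\ge r^\beta$ on $B_r$ holds trivially. So your proposal establishes the theorem only in the regime $\alpha\ge 1/2$; you need the $p=1$ H\"older/Cauchy--Schwarz argument to cover $\alpha<1/2$.

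Two smaller remarks: your iteration step should produce $\int_{B_\rho}|y|^\beta|\nabla u|^2\le C\rho^n$ exactly, not $\rho^{n-\delta}$ — any loss $\delta>0$ would yield $C^{0,\alpha'}$ with $\alpha'<\alpha$ and destroy optimality, so the bookkeeping there must be tight. And boundedness of $u$ is not actually needed for any step in the energy comparison; the whole argument is carried out on the Dirichlet energy level, so the De Giorgi/truncation discussion you append, while not wrong, is superfluous.
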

\begin{proof}
For every $r\in(0,1)$ and $(x_0,y_0)\in B_1$, let us consider the harmonic replacement of $u$ in $B_r(x_0,y_0)$ (we have chosen $r<1-|x_0|$), i.e. the solution of
\begin{equation}\label{2.2}
-\Div(|y|^\beta\nabla h_r^{x_0,y_0})=0\,\textrm{ in }\, B_r(x_0,y_0),\,\,\,\,h_r^{x_0,y_0}=u\,\textrm{ on }\,\partial B_r(x_0,y_0).
\end{equation}
From the translation invariance in $x$, we may assume $x_0=0$. We will use the notation $h_r$ for the solution of \eqref{2.2}. Note that $u$ is an admissible Dirichlet datum (see Theorems 2.2 and 2.3 of \cite{CRS10}). Note also, that for all $r>0$ one has $I_\varepsilon(u,B_r)\leq I_\varepsilon(h_r,B_r)$. The latter implies
\begin{equation}\label{2.3}
\int_{B_r}|y|^\beta|\nabla u|^2\leq\int_{B_r}|y|^\beta|\nabla h_r|^2+Cr^n.
\end{equation}
Note that although the constant $C$ depends on $\varepsilon$, but this does not bother us, because $\varepsilon$ will always be fixed (maybe very small but fixed).

The rest of the proof is the same as of Theorem 1.1 in \cite{CRS10}. We bring it here for reader's convenience.

Due to the identity $\displaystyle{\int_{B_r}}|y|^\beta\nabla h_r\cdot\nabla(u-h_r)=0$, we get from \eqref{2.3}
$$
\int_{B_r}|y|^\beta|\nabla(u-h_r)|^2\leq Cr^n.
$$
Therefore, if $r<\rho<1$,
\begin{eqnarray}\label{2.4}
\int_{B_r}|y|^\beta|\nabla u|^2&=&\int_{B_r}|y|^\beta|\nabla(u-h_\rho+h_\rho)|^2\nonumber \\
&\leq&2\bigg(\int_{B_\rho}|y|^\beta|\nabla(u-h_\rho)|^2+\int_{B_r}|y|^\beta|\nabla h_\rho|^2\bigg)\nonumber \\
&\leq&C\rho^n+2\int_{B_r}|y|^\beta|\nabla h_\rho|^2\nonumber \\
&\leq&C\rho^n+C\bigg(\frac{r}{\rho}\bigg)^{n+1+\beta}\int_{B_\rho}|y|^\beta|\nabla h_\rho|^2\,\,\,\,\textrm{ by Theorem 2.6 of \cite{CRS10}}\nonumber\\
&\leq&C\rho^n+C\bigg(\frac{r}{\rho}\bigg)^{n+1+\beta}\int_{B_\rho}|y|^\beta|\nabla u|^2.
\end{eqnarray}
Let now $\delta<1/2$. If $\rho=\delta^k$, $r=\delta^{k+1}$, $\mu=\delta^n$, then \eqref{2.4} gives
$$
\int_{B_{\delta^{k+1}}}|y|^\beta|\nabla u|^2\leq C\mu^k+C\mu\delta^{2(1-\alpha)}\int_{B_{\delta^k}}|y|^\beta|\nabla u|^2,
$$
which in turn, by choosing $\delta$ such that $q=C\delta^{2(1-\alpha)}<1$ and using induction, gives
$$
\int_{B_{\delta^k}}|y|^\beta|\nabla u|^2\leq\frac{C^2}{1-q}\mu^{k-1}.
$$
Therefore, for all $r<1/2$
\begin{equation}\label{2.5}
\int_{B_r}|y|^\beta|\nabla u|^2\leq Cr^n.
\end{equation}
Now, if $\alpha\leq1/2$, then $\beta\geq0$, and
$$
\int_{B_r}|\nabla u|\leq\bigg(\int_{B_r}|y|^{-\beta}\bigg)^{1/2}\bigg(\int_{B_r}|y|^\beta|\nabla u|^2\bigg)^{1/2}\leq Cr^{n+\alpha},
$$
which is \eqref{2.1} with $p=1$, and so $u\in C^{0,\alpha}(B_{1/2})$.

In case of $\alpha>1/2$, we get from \eqref{2.5}
$$
\int_{B_r}|\nabla u|^2\leq r^{-\beta}\int_{B_r}|y|^\beta|\nabla u|^2\leq Cr^{n-\beta}=Cr^{n-1+2\beta},
$$
which is \eqref{2.1} with $p=2$.
\end{proof}

Next we prove the non-degeneracy of a minimizer.

\begin{theorem}[Non-degeneracy] \label{t2.2}
If $u$ is a local minimizer of \eqref{1.7}, then there exists a constant $C_0>0$ such that for all $x\in B_{1/2}^n(0)\cap\{u>0\}$,
$$
u(x,0)\geq C_0\textrm{dist}(x,\partial\{u>0\})^\alpha.
$$
\end{theorem}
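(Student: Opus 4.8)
The plan is to establish non-degeneracy by a contradiction/comparison argument based on the same harmonic-replacement scheme used in the optimal regularity proof, adapted so that an excessively small value of $u$ at an interior free-boundary point forces the competitor to strictly beat $u$ in energy. More precisely, fix $x_0 \in B_{1/2}^n(0) \cap \{u>0\}$, let $d = \mathrm{dist}(x_0, \partial\{u>0\})$, and after a translation assume $x_0 = 0$. Working in the cylinder (or ball) $B_r = B_r(0,0) \subset \mathbb{R}^{n+1}$ for $r \le d$, consider the harmonic replacement $h_r$ of $u$ as in \eqref{2.2}; by the De Giorgi--Nash--Moser type estimates for the degenerate operator $\mathrm{Div}(|y|^\beta \nabla \cdot)$ (see \cite{CS07, CRS10}), $h_r$ is H\"older continuous and, being nonnegative with the maximum principle, controls $u(0,0)$ from above by a multiple of $\fint_{\partial B_r} u$, while its Dirichlet energy is controlled below in terms of its boundary oscillation.

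First I would set up the energy inequality. Minimality of $u$ against $h_r$ on $B_r$ gives, exactly as in \eqref{2.3}, the inequality $\int_{B_r} |y|^\beta |\nabla(u-h_r)|^2 \le C r^n$ where the constant depends on $\varepsilon$ but is harmless since $\varepsilon$ is fixed. I would then play this against a lower bound: if $u(0,0)$ were very small, say $u(0,0) = \sigma r^\alpha$ with $\sigma$ tiny, then by scaling (the natural scaling of the extension problem is $u_r(X) = r^{-\alpha} u(rX)$, under which the $|y|^\beta$-Dirichlet energy on $B_r$ rescales to that on $B_1$ and the $\mathcal{L}^n$-measure term rescales with a favorable power) one reduces to the unit scale: the rescaled minimizer $u_r$ satisfies $u_r(0,0) = \sigma$, is a minimizer of a rescaled functional $I_{\varepsilon_r}$ with $\varepsilon_r \to \infty$ (so the penalization only helps — decreasing the positivity set decreases energy), and the positivity set of $u_r$ in $B_1^n$ has some nonzero measure since $u_r(0) > 0$. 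Letting $\sigma \to 0$ along a sequence, the rescaled minimizers converge (using the compactness from Lemma \ref{l2.1}'s embeddings and the uniform energy bound) to a limit $u_\infty$ that is a nonnegative $|y|^\beta$-harmonic function in $B_1$ vanishing at the origin on $\{y=0\}$; by the strong maximum principle / Harnack inequality for the degenerate equation, $u_\infty \equiv 0$ on $\{y=0\} \cap B_1$, hence $u_\infty \equiv 0$. But then for $\sigma$ small the competitor obtained by replacing $u_r$ with its harmonic replacement (or with $0$) on $B_{1/2}$ strictly decreases $I_{\varepsilon_r}$, because the energy gain is of order one while the loss in the $f_{\varepsilon_r}$ term is nonpositive — contradicting minimality. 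Unwinding the scaling yields $u(0,0) \ge C_0 d^\alpha$.

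An alternative, more quantitative route avoids compactness: directly, from $h_r \ge 0$ being $|y|^\beta$-harmonic with $h_r = u$ on $\partial B_r$, the value $h_r(0,0)$ is comparable to a weighted average of $u$ on $\partial B_r$, and if $u$ is small on $B_r$ one gets, via the Poincar\'e inequality adapted to the $|y|^\beta$ weight, that $\int_{B_r}|y|^\beta|\nabla h_r|^2$ is small relative to $r^n$ unless $u$ already satisfies the desired lower bound on $\partial B_r$; combined with $\int_{B_r}|y|^\beta|\nabla(u-h_r)|^2 \le Cr^n$ and the subharmonicity-type growth $\sup_{B_r}|u| \lesssim r^{-\alpha}(\ldots)^{1/2}$, one closes an iteration in $r$. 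Either way, the argument reduces non-degeneracy to the competition between the (fixed-$\varepsilon$) quadratic energy and the linear volume penalization.

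The main obstacle, I expect, is handling the penalization term $f_\varepsilon$ correctly during the comparison and scaling: unlike the classical Alt--Caffarelli setting where the lower-order term is $\mathcal{L}^n(\{u>0\})$ with a fixed positive weight, here $f_\varepsilon$ can be \emph{negative} (for $s < \omega$) and its slope changes across $s = \omega$, so one must verify that decreasing the positivity set inside $B_r$ indeed does not increase $f_\varepsilon$ — this is true because $f_\varepsilon$ is nondecreasing, so shrinking $\{u>0\}$ decreases $f_\varepsilon$, which is exactly the sign we need. Care is also required to ensure the harmonic replacement $h_r$ really is an admissible competitor (its trace still equals $\varphi$ wherever that constraint bites, and $r < d$ keeps $B_r$ away from $\partial\{u>0\}$, so inside $B_r$ the constraint from $D$ is irrelevant for $x_0 \in B_{1/2}^n$ provided $D^c$ is where the action is — one should restrict attention to free-boundary points in $D^c$ or note the statement is interior), and that the degenerate regularity theory of \cite{CS07, CRS10} supplies the needed boundary Harnack / oscillation estimates uniformly at the relevant scales. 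Modulo these bookkeeping points, the structure mirrors the classical non-degeneracy proof.
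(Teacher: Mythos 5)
Your overall strategy — build a competitor that is forced to zero near the interior point $(z,0)$ so that the penalty $f_\varepsilon$ decreases by a definite amount while the Dirichlet energy changes only slightly — is indeed the right one, and you correctly note that the monotonicity of $f_\varepsilon$ is what makes shrinking the positivity set favorable. But the execution has real gaps, and the paper's proof is both more direct and free of them.

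First, the \emph{harmonic replacement cannot be the competitor here}. Since $d = \mathrm{dist}(x_0,\partial\{u>0\})$, the ball $B_d^n(x_0)$ is contained in $\{u>0\}$, so after rescaling $u_d$ is $|y|^\beta$-harmonic on all of $B_1$; it therefore \emph{equals} its own harmonic replacement, and the energy gain you invoke is identically zero. Worse, a nontrivial harmonic replacement would only enlarge the positivity set, increasing $f_\varepsilon$. Only the ``replace by $0$'' option has a chance, and it must be implemented with an explicit cutoff so that the competitor still matches $u$ on $\partial B_d(z,0)$. This is exactly what the paper does: it takes $v=\min(u,\delta\gamma)$, where $\gamma$ vanishes on $B_{d/2}(z,0)$ and equals $2c$ outside $B_{3d/4}(z,0)$, with $c$ the Harnack constant, so that $v=u$ near $\partial B_d(z,0)$ and $v\equiv 0$ on the inner ball. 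Second, the claim that the rescaled penalization parameter satisfies $\varepsilon_r\to\infty$ is incorrect: under the natural scaling $u_r(X)=r^{-\alpha}u(x_0+rX)$, both $\int|y|^\beta|\nabla\cdot|^2$ over $B_r$ and $\mathcal L^n(\{\cdot>0\}\cap B_r^n)$ scale like $r^n$, and since $f_\varepsilon$ is piecewise linear with slopes $\varepsilon$ and $1/\varepsilon$, the effective parameter is scale-invariant; this slip is not fatal, since the monotonicity of $f_\varepsilon$ is all that is used, but it signals that the comparison has not been carried out quantitatively. Third, the compactness/blow-up detour is unnecessary and does not by itself supply the contradiction: $u_\infty\equiv 0$ only gives qualitative smallness, while what you actually need is the quantitative bound $\sup_{B_{3d/4}(z,0)}u\le c\,u(z,0)=c\delta$, which the interior Harnack inequality of \cite{FKS82} gives immediately since $u$ is $|y|^\beta$-harmonic in $B_d(z,0)$. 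With that bound and the explicit cutoff competitor, the energy increase is $O(\delta^2 d^{n-2\alpha})=O((\delta/d^\alpha)^2 d^n)$ while the penalty decrease is at least $\varepsilon\,\mathcal L^n(B_{d/2}^n)\sim \varepsilon d^n$, so if $\delta/d^\alpha=u(z,0)/d^\alpha<C_0$ with $C_0$ small (depending on $\varepsilon$), $I_\varepsilon(v)<I_\varepsilon(u)$, a contradiction. You mention Harnack only in passing; making it the centerpiece, rather than a compactness argument, is precisely what closes the proof.
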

\begin{proof}
Assume that the conclusion of the theorem is not true. It means that for every $C_0>0$ there is a point $z\in B_{1/2}^n(0)\cap\{u>0\}$ such that  $u(z,0)<C_0\textrm{dist}(z,\partial\{u>0\})^\alpha$. We denote by $d$ the distance of the point $(z,0)$ from the free boundary. We also define $\delta:=u(z,0)$. The contradictory assumption implies that $\delta$ can be made as small as we wish.

From the Harnack inequality, \cite{FKS82}, there exists $c>0$ such that $u\leq c\delta$ in $B_{d}(z,0)$. Now if $\gamma$ is a smooth nonnegative function such that
$$
\gamma(x,y)=0\,\,\textrm{ in }\,\, B_{d/2}(z,0),\,\,\,\,\gamma(x,y)=2c\,\,\textrm{ in }\,\, B_{7d/8}(z,0)\setminus B_{3d/4}(z,0),
$$
we define
$$
v(x,y):=\min(u(x,y),\delta\gamma(x,y)).
$$
Note that $v\in H^1(\beta,B_{d}(z,0))$, and $v=u$ at the boundary of the ball, so it is an admissible test function for \eqref{1.7}. Therefore
\begin{equation}\label{2.6}
I_\varepsilon(u,B_{d}(z,0))\leq I_\varepsilon(v,B_{d}(z,0)).
\end{equation}
On the other hand, from the definition of $v$ one has
$$
\int_{B_{d}(z,0)}|y|^\beta|\nabla v|^2\leq\int_{B_{d}(z,0)}|y|^\beta|\nabla u|^2+O(\delta),
$$
and since $v=0$ in $B_{d/2}(z,0)$, then
$$
\mathcal{L}^n(\{v>0\})\leq\mathcal{L}^n(\{u>0\})-\mathcal{L}^n(B_{d/2}(z,0)\cap\mathbb{R}^n),
$$
which implies that
$$
f_\varepsilon\big(\mathcal{L}^n(\{v>0\})\big)<f_\varepsilon\big(\mathcal{L}^n(\{u>0\})\big).
$$
Therefore,
$$
I_\varepsilon(u,B_{d}(z,0))>I_\varepsilon(v,B_{d}(z,0)),
$$
which contradicts \eqref{2.6}. We remark that the constant $C_0$ may depend on $\varepsilon$.
\end{proof}

Now, arguing as in  \cite[Proposition 3.3]{CRS10}, it is possible to show that $u$ is in fact strongly non degenerate.

\begin{lemma}\label{l2.2}
If $u$ is a local minimizer of \eqref{1.7} in $B_1$, and $(0,0)$ is a free boundary point, then there is $C>0$ such that for $r\in(0,1/2)$,
$$
\sup_{B_r^n}u\geq Cr^\alpha.
$$
\end{lemma}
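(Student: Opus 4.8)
The plan is to run the non-degeneracy comparison argument of \cite[Proposition~3.3]{CRS10} (in the spirit of Alt--Caffarelli), the only structural novelty being the volume term. Since $f_\varepsilon$ is increasing and $f_\varepsilon(s_1)-f_\varepsilon(s_2)\ge\varepsilon(s_1-s_2)$ for $s_1\ge s_2$, deleting from the positivity set a slice of $\mathcal{L}^n$-measure $V$ lowers $f_\varepsilon$ by at least $\varepsilon V$; this is all that enters, with $\varepsilon$ in the role of the constant $1$ of \cite{CRS10}. One works with the extension $u$ on balls of $\mathbb{R}^{n+1}$ centered at the free boundary point, placed at the origin, and it suffices to prove $\sup_{B_r}u\ge c\,r^\alpha$ for $r\in(0,1/2)$; the bound on $B_r^n$ then follows by slightly shrinking $r$ and propagating a value of $u$ attained off $\{y=0\}$ down to its trace via the Harnack inequality \cite{FKS82} and the $C^{0,\alpha}$ estimate of Theorem~\ref{t2.1}. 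Arguing by contradiction and using that $r\mapsto\sup_{B_r}u$ is nondecreasing, it is enough to preclude radii $r_j\to0$ with $\eta_j:=r_j^{-\alpha}\sup_{B_{r_j}}u\to0$.

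Fix such an $r=r_j$ and set $\eta=\eta_j$. First I would turn the smallness of $\sup_{B_r}u$ into a pointwise bound: as in the proof of Theorem~\ref{t2.1}, $u$ lies below its $|y|^\beta$-harmonic replacement $h$ on $B_r$ (by minimality, comparing with $\min(u,h)$), and $h(0)\le\sup_{\partial B_r}u=\eta r^\alpha$, so \cite{FKS82} gives $u\le h\le\bar m:=C\eta r^\alpha$ on $B_{r/2}$. Then I would test \eqref{1.7} against $v:=\min(u,\bar m\,\psi_r)$, where $\psi_r(X):=\psi(2X/r)$ and $\psi$ is a fixed smooth function with $0\le\psi\le C$, $\psi\equiv0$ on $B_{1/2}$ and $\psi\ge2$ on $\partial B_1$; then $v=u$ on $\partial B_{r/2}$ and $v\equiv0$ on $B_{r/4}$, so $v$ is admissible. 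On $\{u>\bar m\psi_r\}$ one has $|\nabla v|^2\le\bar m^2|\nabla\psi_r|^2$, so the Dirichlet energy of $v$ exceeds that of $u$ by at most $\bar m^2\!\int_{B_{r/2}}|y|^\beta|\nabla\psi_r|^2=C\bar m^2 r^{\,n-2\alpha}$ (it is here that $\beta=1-2\alpha$ makes the scaling exponents match), whereas $f_\varepsilon$ drops by at least $\varepsilon\,\mathcal{L}^n(\{u>0\}\cap B_{r/4}^n)$; hence, by minimality of $u$,
\[
\varepsilon\,\mathcal{L}^n\!\left(\{u>0\}\cap B_{r/4}^n\right)\ \le\ C\bar m^2 r^{\,n-2\alpha}\ =\ C\,\eta^2 r^n .
\]

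What remains, and what I expect to be the main obstacle, is to produce a matching \emph{lower} density bound $\mathcal{L}^n(\{u>0\}\cap B_{r/4}^n)\ge c_0 r^n$ at the free boundary point: this records how the thin, $n$-dimensional free boundary carries the $(n+1)$-dimensional Dirichlet energy, and it is intertwined with the very non-degeneracy being proved. Following \cite{CRS10}, I would obtain it together with the non-degeneracy through a blow-up. The rescalings $u_j(X):=r_j^{-\alpha}u(r_j X)$ are, thanks to $\beta=1-2\alpha$, local minimizers of one and the same Alt--Caffarelli-type functional (independent of $j$), with energies uniformly bounded by \eqref{2.5}; along a subsequence $u_j\to u_0$ in $H^1(\beta,\cdot)$ and locally uniformly, with $u_0\ge0$ a local minimizer, $u_0(0)=0$, and --- because $\sup_{B_1^n}u_j\le\eta_j\to0$ --- with trace vanishing on $B_1^n$, so that $u_0$ is $|y|^\beta$-harmonic there and therefore $\equiv0$ by Harnack. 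The contradiction is that the origin cannot stop being a free boundary point in the limit: the weak non-degeneracy of Theorem~\ref{t2.2} and the optimal regularity of Theorem~\ref{t2.1}, holding with $j$-independent constants for the fixed limiting functional, keep $\mathcal{L}^n(\{u_j>0\}\cap B_{1/4}^n)$ bounded away from $0$, against the displayed estimate. This yields $\sup_{B_r}u\ge c\,r^\alpha$ for $r\in(0,1/2)$, and with it the lemma.
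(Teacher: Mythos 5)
Your first half—the contradiction set-up, the harmonic replacement to get $u\le\eta r^\alpha$ on $B_{r/2}$ (in fact this step is redundant, since $\eta r^\alpha$ already bounds $\sup_{B_r}u$ by definition of $\eta$), and the cutoff competitor $v=\min(u,\bar m\psi_r)$ yielding
\[
\varepsilon\,\mathcal{L}^n\!\left(\{u>0\}\cap B_{r/4}^n\right)\ \le\ C\,\eta^2 r^{\,n}
\]
—is sound and indeed follows the spirit of \cite[Proposition 3.3]{CRS10}, which is what Lemma~\ref{l2.2} refers to. But the step you yourself flag as the obstacle is where the argument breaks: you need a lower bound $\mathcal{L}^n(\{u_j>0\}\cap B_{1/4}^n)\ge c_0$ uniform in $j$, and you propose to extract it from weak non-degeneracy (Theorem~\ref{t2.2}) and optimal regularity (Theorem~\ref{t2.1}). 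Those two facts do not give it. Theorem~\ref{t2.2} is a pointwise lower bound in terms of the distance to the free boundary; at a free boundary point all nearby positivity points may themselves sit arbitrarily close to $\partial\{u>0\}$, so this gives no definite amount of positivity. In the paper, the positive density of $\{u>0\}$ is Theorem~\ref{t2.3}, whose proof uses the \emph{strong} non-degeneracy of Lemma~\ref{l2.2} (``from the non-degeneracy we know that there is $y\in B_r^n$ such that $u(y)\ge Cr^\alpha$'') — exactly the statement you are trying to prove. So the blow-up step is circular: the contradiction you invoke at the end presupposes the conclusion.

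The way \cite[Proposition 3.3]{CRS10} (following Alt--Caffarelli) avoids this is to prove non-degeneracy \emph{without} any density input. One chooses a competitor $v$ vanishing identically on a smaller ball (e.g. the $|y|^\beta$-harmonic function in the annulus $B_r\setminus B_{r/2}$ equal to $u$ on $\partial B_r$ and to $0$ in $B_{r/2}$), exploits the one-sided orthogonality so that
\[
\int_{B_r}|y|^\beta\bigl(|\nabla u|^2-|\nabla v|^2\bigr)\ \ge\ \int_{B_{r/2}}|y|^\beta|\nabla u|^2,
\]
and then uses a trace/Poincar\'e type calibration to bound $\mathcal{L}^n(\{u>0\}\cap B_{r/2}^n)$ itself by a small multiple of $\int_{B_{r/2}}|y|^\beta|\nabla u|^2$ (small because $\sup_{B_r}u\le\eta r^\alpha$), so that minimality forces $u\equiv0$ in $B_{r/2}$ — i.e., $0$ was never a free boundary point. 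Because this produces $u\equiv0$ directly rather than a measure estimate, no positive density is needed, and the logical order non-degeneracy $\Rightarrow$ density (Theorem~\ref{t2.3}) is preserved. Your blow-up, by contrast, only reaches a dead end: $u_0\equiv0$ alone carries no contradiction unless some $j$-uniform positivity is fed in, and that input is exactly what is missing.
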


As a consequence of the H\"{o}lder regularity result and the non-degeneracy, we get the positive density result below.

\begin{theorem}[Positive density]\label{t2.3}
Let $(0,0)$ be a free boundary point. If $u$ is a local minimizer of \eqref{1.7} in $B_1$, then there is a constant $C_1>0$ such that for every $r>0$
\begin{equation}\label{2.7}
\mathcal{L}^n(\{u=0\}\cap B_r^n)\geq C_1r^n,\,\,\,\,\,\,\mathcal{L}^n(\{u>0\}\cap B_r^n)\geq C_1r^n.
\end{equation}
\end{theorem}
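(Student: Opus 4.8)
\emph{Sketch of proof.} The two inequalities in \eqref{2.7} are obtained by different mechanisms, so I treat them separately.

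\emph{The positive phase.} This is the soft part, combining the strong non-degeneracy of Lemma~\ref{l2.2} with the optimal regularity of Theorem~\ref{t2.1}. Fix $r\in(0,1/2)$; applying Lemma~\ref{l2.2} at the free boundary point $(0,0)$ with radius $r/2$ produces $z\in\overline{B_{r/2}^n}$ with $u(z,0)\geq c\,r^\alpha$, $c=c(\varepsilon)>0$. Since the trace $x\mapsto u(x,0)$ is locally $C^{0,\alpha}$, we have $u(x,0)\geq c\,r^\alpha-[u]_{C^{0,\alpha}}|x-z|^\alpha>0$ whenever $|x-z|<\kappa r$, with $\kappa=\kappa(\varepsilon)\in(0,1)$ small, depending only on $c$ and the local Hölder seminorm of $u$. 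Hence $B_{\kappa r}^n(z)\subset\{u>0\}$, and since $z\in\overline{B_{r/2}^n}$ this ball meets $B_r^n$ in a set of measure $\gtrsim r^n$, which is the second inequality in \eqref{2.7}.

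\emph{The zero phase.} This is the main point. I first record that a local minimizer is $|y|^\beta$-subharmonic in $B_1$: for $\phi\geq0$, $\phi\in C_0^\infty(B)$ with $B\subset B_1$, and $t>0$, the competitor $v=u-t\phi$ satisfies $\{v>0\}\subseteq\{u>0\}$, so the penalization term in \eqref{1.7} does not increase; passing to the limit $t\downarrow0^+$ in the residual inequality gives $\int_{B_1}|y|^\beta\nabla u\cdot\nabla\phi\leq0$ for all such $\phi$, i.e. $-\Div(|y|^\beta\nabla u)\leq0$ in $B_1$. Now fix $r$ small and let $h$ be the $|y|^\beta$-harmonic replacement of $u$ in $B_r=B_r(0,0)$ from \eqref{2.2}. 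Since $u\geq0$, comparison yields $0\leq u\leq h$ in $B_r$; as $h\not\equiv0$ (by Lemma~\ref{l2.2}), the strong maximum principle gives $h>0$ throughout $B_r$, whence $\mathcal{L}^n(\{h>0\}\cap B_r^n)=\mathcal{L}^n(B_r^n)$. Testing \eqref{1.7} with $h$, using the orthogonality $\int_{B_r}|y|^\beta\nabla h\cdot\nabla(u-h)=0$ and the fact that $f_\varepsilon$ is increasing and $1/\varepsilon$-Lipschitz, I obtain
\begin{equation*}
\int_{B_r}|y|^\beta|\nabla(u-h)|^2\leq f_\varepsilon\big(\mathcal{L}^n(B_r^n)\big)-f_\varepsilon\big(\mathcal{L}^n(\{u>0\}\cap B_r^n)\big)\leq\frac1\varepsilon\,\mathcal{L}^n(\{u=0\}\cap B_r^n).
\end{equation*}

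To bound the left-hand side from below, note that Lemma~\ref{l2.2} together with $h\geq u$ gives $\sup_{B_{r/2}}h\geq c\,r^\alpha$; since $h\geq0$ is $|y|^\beta$-harmonic and $|y|^\beta$ is an $A_2$ weight, the Harnack inequality \cite{FKS82} yields $h\geq c'\,r^\alpha$ on $B_{r/2}$. On the other hand $u(0,0)=0$ and Theorem~\ref{t2.1} give $u(X)\leq[u]_{C^{0,\alpha}}|X|^\alpha$, so $h-u\geq\tfrac12 c'\,r^\alpha$ on a ball $B_{\kappa r}\subset B_{r/2}$ with $\kappa=\kappa(\varepsilon)$ small. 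Since $\int_{B_\rho}|y|^\beta\,dX\simeq\rho^{\,n+1+\beta}$ and $\beta=1-2\alpha$, this gives $\int_{B_r}|y|^\beta(h-u)^2\geq c''\,r^{2\alpha}\,r^{\,n+1+\beta}=c''\,r^{\,n+2}$. Combining with the weighted Poincaré inequality for the $A_2$ weight $|y|^\beta$ (applicable since $u-h$ has zero trace on $\partial B_r$), $\int_{B_r}|y|^\beta(h-u)^2\leq Cr^2\int_{B_r}|y|^\beta|\nabla(h-u)|^2$, I get $\int_{B_r}|y|^\beta|\nabla(u-h)|^2\geq c'''r^n$, and inserting this into the energy inequality above produces $\mathcal{L}^n(\{u=0\}\cap B_r^n)\geq c'''\varepsilon\,r^n=:C_1 r^n$, the first inequality in \eqref{2.7}.

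The delicate step is the zero-phase bound: one has to arrange the harmonic-replacement comparison so that the change of the penalization term is controlled by $\varepsilon^{-1}\mathcal{L}^n(\{u=0\}\cap B_r^n)$, and then convert the Harnack lower bound for $h$ near the free boundary point into a lower bound for $\int_{B_r}|y|^\beta|\nabla(u-h)|^2$ through a weighted Poincaré inequality. As elsewhere, the constants obtained depend on $\varepsilon$ and degenerate as $\varepsilon\to0$.
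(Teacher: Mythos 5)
Your positive-phase argument is essentially the paper's: non-degeneracy (Lemma~\ref{l2.2}) produces a point of $B_{r/2}^n$ where $u\gtrsim r^\alpha$, and H\"older continuity (Theorem~\ref{t2.1}) then inflates this to a ball of radius $\kappa r$ contained in $\{u>0\}$. Your extra remark that the ball may stick out of $B_r^n$ but still meets it in measure $\gtrsim r^n$ is a sensible precision the paper glosses over; otherwise the argument is the same.

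For the zero phase you take a genuinely different route. The paper argues by compactness and contradiction: rescaling reduces to $r=1$, then a hypothetical sequence of minimizers with vanishing zero phase is extracted, the uniform $C^{0,\alpha}$ bound gives a uniform limit $u_0$, the measure hypothesis forces $u_0$ to be an unconstrained Dirichlet minimizer, and the strong maximum principle (since $u_0(0)=0$ and $u_0\ge0$) makes $u_0\equiv0$, contradicting non-degeneracy. You instead run a direct, quantitative energy estimate at scale $r$: show $u$ is $|y|^\beta$-subharmonic, take the $|y|^\beta$-harmonic replacement $h\ge u$, use the Harnack inequality plus Lemma~\ref{l2.2} to get $h\gtrsim r^\alpha$ on $B_{r/2}$ while $u\lesssim(\kappa r)^\alpha$ near $(0,0)$, deduce a lower bound $\int_{B_r}|y|^\beta|\nabla(u-h)|^2\gtrsim r^n$ via the weighted Poincar\'e inequality, and compare with the upper bound $\varepsilon^{-1}\mathcal{L}^n(\{u=0\}\cap B_r^n)$ that the minimality plus orthogonality of the harmonic replacement yield. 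All the intermediate steps check out: the subharmonicity of $u$ from the competitor $u-t\phi$ with $\phi\ge0$; the identity $h>0$ on $B_r$ by the strong minimum principle once $h\not\equiv0$; the scaling exponents (with $\beta=1-2\alpha$ one has $2\alpha+(n+1+\beta)=n+2$, and Poincar\'e on $B_r$ costs $r^2$, so the $r$'s match); and the resulting bound $\mathcal{L}^n(\{u=0\}\cap B_r^n)\gtrsim\varepsilon\,r^n$.

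The two proofs use the same raw material (H\"older regularity, non-degeneracy, strong maximum principle), but differ in how they are organized. The paper's compactness argument is shorter to state but needs care with the rescaling of the penalized functional (the line ``it is enough to consider $r=1$'' hides how $f_\varepsilon$ transforms under the blow-up) and gives a constant whose dependence on $\varepsilon$ is non-constructive. Your direct proof is more transparent at each scale, requires no rescaling of the penalization, and produces an explicit rate $C_1\sim\varepsilon$. Both are correct; yours is arguably cleaner and more in line with the quantitative spirit of Section~\ref{sct HF}.
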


\begin{proof}
 In fact, from the non-degeneracy we know that there is $y\in B_r^n$ such that $u(y)\geq Cr^\alpha>0$. By H\"{o}lder continuity, $u>0$ in $B_{\delta r}^n(y)$ for a small $\delta>0$, which gives us the second estimate of \eqref{2.7}.

To prove the first estimate of \eqref{2.7}, it is enough to consider the case $r=1$. Assume the contrary. There is a sequence of minimizers $u_k$ defined in $B_1$, such that $0\in\partial\{u_k>0\}$ and
\begin{equation}\label{2.8}
\lim_{k\rightarrow+\infty}\mathcal{L}^n(\{u_k=0\})=0.
\end{equation}
Recall that $u_k$ is uniformly H\"{o}lder continuous. We may assume that the sequence converges to $u_0$ uniformly. Moreover, one has
$$
\int_{B_1}|y|^\beta|\nabla u_0|^2\,dx\,dy\leq\liminf_{k\rightarrow+\infty}\int_{B_1}|y|^\beta|\nabla u_k|^2\,dx\,dy.
$$
For every $v$ agreeing with $u_k$ on $\partial B_1$ one has $I_\varepsilon(u_k,B_1)\leq I_\varepsilon(v,B_1)$. Together with \eqref{2.8} this implies for every $v\in H^1(\beta,B_1)$ which agrees with $u_0$ on $\partial B_1$,
\begin{eqnarray*}
&&f_\varepsilon\big(\mathcal{L}^n(B_1^n)\big)+\int_{B_1}|y|^\beta|\nabla u_0|^2\,dx\,dy\leq I_\varepsilon(v,B_1)\nonumber\\
&&\leq f_\varepsilon\big(\mathcal{L}^n(B_1^n)\big)+\int_{B_1}|y|^\beta|\nabla v|^2\,dx\,dy.
\end{eqnarray*}
Therefore, $u_0$ minimizes the Dirichlet integral over the unit ball of $\mathbb{R}^n$, and as such, satisfies $\Div(|y|^\beta\nabla u_0)=0$ in $B_1$. Since $u_0(0)=0$ and $u_0\geq0$, then the strong maximum principle, see \cite{B97}, implies that $u_0\equiv0$, which is a contradiction on the non-degeneracy  property. Once again we remark, that the constant $C_1$ may depend on $\varepsilon$.
\end{proof}

\section{Further properties of solutions}\label{sct FPS}

The ultimate goal of this section is to prove that the free boundaries of local minimizers of \eqref{1.7} have local finite parameter. The results in this section are the analogue of the ones from \cite{AC81}.

\begin{proposition}\label{p3.1}
For a local minimizer $u$ in $\Omega$, $\mu (u):=-(-\Delta)^\alpha u$ is a nonnegative Radon measure with support in $\Omega\cap\partial\{u>0\}$.
\end{proposition}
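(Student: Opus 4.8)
The plan is to show first that $u$ is (weakly) $(-\Delta)^\alpha$-superharmonic, i.e. that $-(-\Delta)^\alpha u \ge 0$ in the distributional sense, and then to upgrade this to a genuine Radon measure and identify its support. To do this I would work in the extension picture: since $u$ is a local minimizer of $I_\varepsilon$, I claim that for every nonnegative $\phi \in C_c^\infty(\Omega)$ with $\phi = 0$ on $\{u=0\}$ we have, comparing $u$ with the competitor $u - t\phi$ for small $t>0$ (which only shrinks the positivity set, hence decreases $f_\varepsilon$ since $f_\varepsilon$ is increasing), the one-sided inequality
\[
\int_{\Omega} |y|^\beta \nabla u \cdot \nabla \phi \, dx\,dy \le 0 ,
\]
obtained by differentiating $t \mapsto I_\varepsilon(u - t\phi, \Omega)$ at $t=0^+$ and using that the volume term is nonincreasing. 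By Theorem \ref{t1.1} this Dirichlet-form inequality is exactly $\langle -(-\Delta)^\alpha u, \phi(\cdot,0)\rangle \le 0$, after integrating the tangential normal derivative $-\lim_{y\to 0} y^\beta v_y$ against the trace of $\phi$; equivalently $\mu(u) := -(-\Delta)^\alpha u$ is a nonnegative distribution on $\Omega$. A nonnegative distribution is automatically a Radon measure (Riesz representation), so $\mu(u)$ is a nonnegative Radon measure.

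Next I would localize the support. On the open set $\{u>0\}\cap\Omega$, Proposition \ref{p1.1} gives $\lim_{y\to 0}|y|^\beta \nabla u(x_0,y) = 0$ at every point with $u(x_0,0)>0$, and in fact $\Div(|y|^\beta\nabla u) = 0$ in the positivity set, so that $(-\Delta)^\alpha u(\cdot,0) = 0$ there by the second part of Proposition \ref{p1.1}; hence $\mu(u) = 0$ on $\{u>0\}\cap\Omega$. On the interior of $\{u = 0\}$ the trace of $u$ vanishes identically, and one checks (again via the extension: $u$ solves $\Div(|y|^\beta\nabla u)=0$ away from the free boundary on the zero side as well, with vanishing Neumann data coming from the even reflection, cf. \eqref{1.6}) that $(-\Delta)^\alpha u$ vanishes there too; alternatively, test the distributional inequality with $\pm\phi$ supported in $\mathrm{int}\{u=0\}$, where both $u-t\phi$ and $u+t\phi$ are admissible competitors that do not change the positivity set, forcing equality and hence $\mu(u)=0$ on that open set. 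Therefore $\mathrm{supp}\,\mu(u) \subset \Omega \cap \big(\mathbb{R}^n \setminus (\{u>0\} \cup \mathrm{int}\{u=0\})\big) = \Omega\cap\partial\{u>0\}$.

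The main obstacle I anticipate is the first step: justifying the one-sided first variation rigorously in the nonlocal/degenerate-weight setting. One must verify that $u - t\phi$ is an admissible $H^1(\beta,\cdot)$ competitor, that the Dirichlet energy $t\mapsto\int|y|^\beta|\nabla(u-t\phi)|^2$ is differentiable at $t=0$ with the expected derivative $-2\int|y|^\beta\nabla u\cdot\nabla\phi$, and — more delicately — that the volume term $f_\varepsilon(\mathcal{L}^n(\{u - t\phi > 0\}\cap\mathbb{R}^n\cap\Omega))$ is genuinely nonincreasing in $t$; this uses $\{u - t\phi>0\}\subset\{u>0\}$ (valid when $\phi\ge 0$) together with monotonicity of $f_\varepsilon$, and requires care about the trace of the extension on $\{y=0\}$ versus the bulk function. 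Once the variational inequality is in place, the passage to a Radon measure and the support computation are standard and follow \cite{AC81}; I would simply cite Proposition \ref{p1.1} and the Riesz representation theorem and omit routine details.
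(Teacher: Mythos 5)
Your argument for the \emph{nonnegativity} of $\mu(u)$ is essentially the right one and matches what the paper has in mind: the paper simply cites Remark~4.2 of \cite{AC81} transported to the extension picture, and your comparison of $u$ with $u-t\phi$ for $t>0$ and $\phi\ge 0$ is exactly that. One small point there: the extra restriction ``$\phi=0$ on $\{u=0\}$'' is both unnecessary and counterproductive. Since $\phi\ge 0$ forces $\{u-t\phi>0\}\subset\{u>0\}$ no matter what $\phi$ does on $\{u=0\}$, the inequality $\int|y|^\beta\nabla u\cdot\nabla\phi\le 0$ holds for \emph{every} nonnegative $\phi\in C_c^\infty(\Omega)$ --- which is what you actually need to conclude that $\mu(u)$ is a nonnegative distribution on all of $\Omega$; restricting to $\phi$ vanishing on $\{u=0\}$ would only test $\mu$ away from the zero set.

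The genuine gap is in the support computation, specifically the claim that $\mu(u)=0$ on the interior of $\{u(\cdot,0)=0\}$, and both of the arguments you offer for it fail. First, the even reflection across $\{y=0\}$ does \emph{not} furnish vanishing Neumann data there. The reflection in \eqref{1.6} is a construction for the Caffarelli--Silvestre extension of a given trace and says nothing a priori about $\lim_{y\to 0}|y|^\beta u_y$; Proposition~\ref{p1.1} delivers that limit equal to zero only at points where $u(x_0,0)>0$. At a point $x_0$ with $u(\cdot,0)\equiv 0$ on a small ball $B^n_\delta(x_0)$, the extension $u$ is $\ge 0$, solves $\Div(|y|^\beta\nabla u)=0$ in the open bulk, and hence (by the strong maximum principle) is strictly positive for $y\ne 0$ unless $u$ is identically zero; the Hopf-type boundary point lemma then forces $\lim_{y\to 0^+}y^\beta u_y(x,y)>0$ for $x\in B^n_\delta(x_0)$. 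So the normal derivative does \emph{not} vanish there, and, unlike the classical \cite{AC81} setting where $u\equiv 0$ in the interior of the zero set implies $\Delta u=0$ trivially, that shortcut is simply unavailable for the nonlocal operator. Second, the ``two-sided competitor'' argument with $\pm\phi$ does not work either: if $\phi\ge 0$ is supported in $\operatorname{int}\{u=0\}$ and $\phi(\cdot,0)\not\equiv 0$, then for any $t>0$ the set $\{u+t\phi>0\}\cap\mathbb{R}^n$ \emph{does} change --- it gains $\{\phi(\cdot,0)>0\}$, a set of positive $\mathcal L^n$-measure, independently of how small $t$ is --- so $u+t\phi$ is not a competitor that leaves the positivity set invariant and you cannot ``force equality.'' The one-sided comparison only gives $\mu(u)\ge 0$ there, not $\mu(u)=0$. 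Locating the support of $\mu(u)$ in the interior of the trace zero set requires a genuinely different argument in the fractional setting (one has to account for the fact that the extension stays positive in the bulk above the zero set), and that is precisely the part that neither of your sketches addresses.
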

\begin{proof}
Once again we recall the extension result from \cite{CS07} (see also Theorem \ref{t1.1} above). The proof is now the same as the one of Remark 4.2 of \cite{AC81}.
\end{proof}
In the spirit of \cite{AC81} a representation theorem can be proven. It plays an important role in the study of the free boundary.

\begin{theorem}[Representation theorem] \label{t3.1}
If $u$ is a local minimizer in $\Omega$, then
\begin{enumerate}
\item $\mathcal{H}^{n-1}(\mathcal{K}\cap\partial\{u>0\}\cap\mathbb{R}^n)<\infty$, for every compact set $\mathcal{K}\subset\Omega$.
\item There exists a Borel fucntion $q_\varepsilon$ such that
$$
\mu(u)=q_\varepsilon\mathcal{H}^{n-1}\lfloor\partial\{u>0\},
$$
that is for any $\psi\in C_0^\infty(\Omega)$ there holds
$$
-\int_\Omega|y|^\beta\nabla u\cdot\nabla\psi=\int_{\{u>0\}}\psi q_\varepsilon\,d\mathcal{H}^{n-1}.
$$
\item There exist constants $0<c<C<\infty$ depending on $n$, $\Omega$ and $\varepsilon$ such that for $B_r(x)\subset\Omega$ and $x\in\partial\{u>0\}$ one has $c\leq q_\varepsilon(x)\leq C$ and
$$
cr^{n-1}\leq\mathcal{H}^{n-1}\big(B_r(x)\cap\partial\{u>0\}\cap\mathbb{R}^n\big)\leq Cr^{n-1}.
$$
\item For $\mathcal{H}^{n-1}$ almost everywhere in $\partial\{u>0\}$ an outward normal $\nu=\nu(x_0)$ is defined and furthermore
$$
u(x_0+x)=q_\varepsilon(x_0)(x\cdot\nu(x_0))^\alpha_++o(|x|),\,\,\,\textrm{ as }x\rightarrow0.
$$
\item $\mathcal{H}^{n-1}\big((\partial\{u>0\}\cap\mathbb{R}^n)\setminus(\partial^*\{u>0\}\cap\mathbb{R}^n)\big)=0$,
where $\partial^*$ is the reduced boundary.

\item The reduced free boundary $\partial^*\{u>0\}\cap\mathbb{R}^n$ is locally a $C^{1,\gamma}$ surface.
\end{enumerate}
\end{theorem}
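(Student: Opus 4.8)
The plan is to follow the program of Alt–Caffarelli \cite{AC81}, transplanted to the degenerate–weighted setting via the Caffarelli–Silvestre extension. The engine behind everything is the combination of three facts already available: (i) the optimal $C^{0,\alpha}$ regularity (Theorem \ref{t2.1}); (ii) the non-degeneracy, in the strong form of Lemma \ref{l2.2} and the consequent positive density of both $\{u>0\}$ and $\{u=0\}$ on traces (Theorem \ref{t2.3}); and (iii) the fact that $\mu(u)=-(-\Delta)^\alpha u$ is a nonnegative Radon measure supported on $\Omega\cap\partial\{u>0\}$ (Proposition \ref{p3.1}), which by Theorem \ref{t1.1} is represented by $-\lim_{y\to0}|y|^\beta u_y$ as a distribution on $\mathbb{R}^n$. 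I would prove the items roughly in the order they are stated, since each feeds the next.

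First, for item (1) and the lower and upper bounds in item (3), I would test the minimality inequality \eqref{1.7} against competitors built by flattening $u$ to zero, respectively by pushing it up, inside small balls $B_r(x)$ centered at free boundary points; comparing the weighted Dirichlet energy gain against the penalization term $f_\varepsilon$ (which is Lipschitz, so its contribution is $O(r^n)$) yields both $\int_{B_r(x)}|y|^\beta|\nabla u|^2 \sim r^{n-1+\beta}$ — this is the Caffarelli–Silvestre analogue of the Alt–Caffarelli energy bound — and the perimeter bounds $c r^{n-1}\le \mathcal{H}^{n-1}(B_r(x)\cap\partial\{u>0\}\cap\mathbb{R}^n)\le C r^{n-1}$ via a De Giorgi-type relative isoperimetric argument using positive density. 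A covering argument then gives finiteness of $\mathcal{H}^{n-1}$ on compacts, i.e.\ item (1), and shows $\mu(u)$ has $(n-1)$-dimensional upper and lower density bounds at every free boundary point, so by the standard Radon–Nikodym/density comparison (Besicovitch) $\mu(u)=q_\varepsilon\,\mathcal{H}^{n-1}\lfloor\partial\{u>0\}$ with $c\le q_\varepsilon\le C$, which is items (2)–(3). The constants depend on $\varepsilon$ only through $\|f_\varepsilon'\|_\infty\le 1/\varepsilon$, which is harmless since $\varepsilon$ is fixed.

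For item (4), I would perform a blow-up analysis: at a point $x_0$ of approximate differentiability of the measure (an $\mathcal{H}^{n-1}$-a.e.\ point of $\partial^*\{u>0\}$), rescale $u_r(x,y):=r^{-\alpha}u(x_0+rx,ry)$; the uniform $C^{0,\alpha}$ bound and the energy estimate give compactness, and the limit $u_0$ is a global minimizer which is $|y|^\beta$-harmonic off $\{y=0\}$, non-degenerate, $\alpha$-homogeneous, with a half-space positivity set by the density/BV structure. The classification of such one-dimensional profiles — they are multiples of $(x\cdot\nu)_+^\alpha$ extended — then yields the asymptotic expansion with coefficient $q_\varepsilon(x_0)$, and identifies $q_\varepsilon$ with the density of $\mu(u)$. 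Item (5) is the measure-theoretic statement that $\mathcal{H}^{n-1}$-a.e.\ point of the topological free boundary lies in the reduced boundary; this follows because, by items (1)–(3), $\chi_{\{u>0\}}\in BV_{\loc}$ with $|\nabla\chi_{\{u>0\}}|$ comparable to $\mathcal{H}^{n-1}\lfloor\partial\{u>0\}$, so the two boundaries agree up to an $\mathcal{H}^{n-1}$-null set by Federer's structure theorem.

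Finally, item (6): once we know the free boundary has a normal in the measure-theoretic sense and the exact expansion of item (4) holds there, I would upgrade to $C^{1,\gamma}$ by a flatness-implies-regularity argument — either an improvement-of-flatness iteration in the spirit of \cite{CRS10,DS} for the thin/extension one-phase problem, or an $\varepsilon$-regularity theorem showing that if the free boundary is trapped in a thin slab at one scale then it is trapped in a thinner slab (with a fixed geometric gain) at the next, by comparison with the exact profile $(x\cdot\nu)_+^\alpha$ and the degenerate Harnack inequality \cite{FKS82}. I expect this last step — the quantitative flatness iteration in the degenerate weighted setting — to be the main obstacle, since it requires a careful boundary Harnack / partial-Harnack estimate for $\Div(|y|^\beta\nabla\cdot)$ up to the free boundary on $\{y=0\}$ and a robust expansion of the extension of $(x\cdot\nu)_+^\alpha$; the penalization term, being lower order ($O(r^n)$ against a free-boundary quantity of order $r^{n-1}$), does not interfere and only affects constants.
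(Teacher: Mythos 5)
Your outline for items (1)--(3), (5) and (6) matches the route the paper takes, though the paper is much terser: it simply cites Alt--Caffarelli \cite{AC81} (Theorem 4.5) and De Silva--Savin \cite{SS12} for (1)--(3), refers to \cite{EG92} for (5), and delegates (6) entirely to the improvement-of-flatness results of \cite{SSS,SR}; the paper never carries out the flatness iteration you anticipate being the main obstacle. The one item the paper proves in detail is (4), and here your sketch has a genuine gap that is worth naming.

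You write that the blow-up limit $u_0$ is ``$\alpha$-homogeneous, with a half-space positivity set'' and then invoke ``the classification of such one-dimensional profiles.'' Neither the homogeneity nor the classification is established by what you have in hand. Homogeneity of blow-ups is usually a consequence of a Weiss-type monotonicity formula, which is not developed in this setting (and the paper never claims it). More importantly, knowing that a global solution is $\alpha$-homogeneous with $\Omega_+(u_0)=\mathbb{R}^n_+$ does not immediately give that $u_0$ depends only on $x_n$ and $y$; showing that reduction is precisely the content that needs proof. The paper sidesteps both issues with a cleaner argument: set $v(x)=(x_n)_+^\alpha$, observe that optimal regularity and non-degeneracy give $0<C_1 v\le u_0\le C_2 v$, and then apply the boundary-Harnack / oscillation decay lemma for $\Div(|y|^\beta\nabla\cdot)$ (Theorem 2.5 of \cite{CRS10}, based on \cite{FKS82}) to the quotient $u_0/v$. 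Because the blow-up limit is a global solution and Harnack constants are scale-invariant, the oscillation estimate
$$
\osc_{B_r}\frac{u_0}{v}\le\lambda\,\osc_{B_{2r}}\frac{u_0}{v}
$$
can be iterated from arbitrarily large scales down, forcing $u_0/v$ to be constant. This yields the one-dimensional expansion $u_0(x,0)=q_\varepsilon(x_0)(x\cdot\nu(x_0))_+^\alpha$ directly, with no appeal to homogeneity or to an a priori classification theorem. I'd suggest replacing your ``classification'' step with this oscillation-lemma iteration; without it, item (4) in your proposal is not actually proved.
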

\begin{proof}
The first three assertions of the theorem follow as those of Theorem 4.5 of \cite{AC81}. For the first assertion we also refer the reader to Theorem 1.1 of \cite{SS12}. Note that $\Omega\cap\{u>0\}\cap\mathbb{R}^n$ has finite perimeter, thus, the reduced boundary is defined as well as the measure theoretic normal $\nu(x)$, for $x\in\partial^*\{u>0\}$ (see, for example \cite{EG92}). The 5th assertion of the theorem is a consequence of the 3rd one and properties of minimizers proved above (see \cite{EG92}). $C^{1,\gamma}$ regularity of the reduced free boundary follows by \cite{SSS}, see also \cite{SR}. \\

The proof of 4 is similar to the corresponding one in \cite{DP05} (Theorem 5.5), but since in our case we are dealing with the fractional Laplacian, some modifications need to be done (similar to \cite{CRS10}).

For a minimizer $u$ we will denote by $\Omega_-(u)\subset\mathbb{R}^n$ the set where it is 0, and by $\Omega_+(u)\subset\mathbb{R}^n$ its positivity set. We will also use $\Gamma(u)\subset\mathbb{R}^n$ to denote the free boundary of $u$, and $\Gamma^*(u)$ - the reduced free boundary.

The reduced free boundary is the set of points $x_0$ at which the following holds (see \cite{G84}): given the half ball $(B_r^n)^+(x_0):=\{(x-x_0)\cdot\nu\geq0\}\cap B_r^n(x_0)$, one has
$$
\lim_{r\rightarrow0}\frac{\mathcal{L}^n\big((B_r^n)^+(x_0)\triangle\Omega_+(u)\big)}{\mathcal{L}^n\big(B_r^n(x_0)\big)}=0.
$$
Note that from the uniform density of $\Omega^\pm$ one has, as $r\rightarrow0$ at the free boundary point $x_0$
\begin{equation}\label{3.1}
B_r^n(x_0)\cap\Gamma^*(u)\subset\{|(x-x_0)\cdot\nu(x_0)|\leq o(r)\}.
\end{equation}

Indeed, if $u(x)=0$ for $(x-x_0)\cdot\nu(x_0)\geq\delta r$, there is $\gamma>0$ such that $\mathcal{L}^n\big(B_{\delta r}^n(x)\cap\{u=0\}\big)\geq\gamma\delta r^n$, which implies
$$
\liminf_{r\rightarrow0}\frac{\mathcal{L}^n\big((B_r^n)^+(x_0)\triangle\Omega_+(u)\big)}{\mathcal{L}^n\big(B_r^n(x_0)\big)}\geq\gamma\delta;
$$
a contradiction.

The same argument is valid, if $x\in\Omega_-(u)$ is such that $(x-x_0)\cdot\nu\leq-\delta r$.\\

In order to prove 4, we remark, that in fact it follows from the fact that blow-up limits at regular points are one-dimensional. To prove it, without loss of generality we assume that $\nu(x_0)=e_n$. Let
$$
u_r(x,y):=\frac{1}{r^\alpha}u(x_0+rx,ry)
$$
be a blow-up of $u$, and let $u_0$ be a blow-up limit. We need to prove that $u_0(x,0)=q(x_n)^\alpha_+$, where $q$ is a constant. There exists a coordinate system $(x',x_n)$ centered at $0$ such that
\begin{itemize}
\item $\Omega_+(u_0)=\mathbb{R}^n_+$ (this is because of \eqref{3.1}),
\item $(-\Delta)^\alpha u_0=0$ in $\Omega_+(u)$.
\end{itemize}
Define $v(x)=(x_n)^\alpha_+$. By optimal regularity and non-degeneracy, there are positive constants $C_1$, $C_2$ such that $0<C_1v\leq u_0\leq C_2v$. By applying the oscillation lemma (see \cite{FKS82} or Theorem 2.5 of \cite{CRS10}), one has that there exists $\lambda\in(0,1)$  such that for all small enough $r>0$,
$$
\osc_{B_r}\frac{u_0}{v}\leq\lambda\osc_{B_{2r}}\frac{u_0}{v}.
$$
Since Harnack constants are invariant under the blow-up scaling, the oscillation lemma holds at every scale, the solutions being global. Thus, one may apply it all the way down from a ball of radius $2^Nr$ ($N$ being arbitrary large) to a ball of radius $r$. Therefore, $\frac{u_0}{v}$ is a constant. Of course, that constant depends on $\varepsilon$ and also on the blow-up point. Although for our further work this fact is not very important, but by following the argument of \cite{AC81}, which proves 2nd assertion of the theorem, one can see that the constant is actually $q_\varepsilon$ appearing in 2.
\end{proof}
Next, we see that points, at which the free boundary has a tangent ball, are regular points.
\begin{definition}\label{d3.1}
We say that a free boundary point $x_0\in\Gamma(u)$ has a tangent ball from outside, if there is a ball $B\subset\Omega_-(u)$ such that $x_0\in B\cap\Gamma(u)$. A point $x_0\in\Gamma(u)$ is said to be regular, if $\Gamma(u)$ has a tangent hyperplane at $x_0$.
\end{definition}

The following result is from \cite{CRS10}. The proof can also be concluded from the 4th assertion of the Theorem \ref{t3.1} together with Theorem \ref{t4.1} below.
\begin{theorem}(The free boundary condition).\label{t3.2}
Let $x_0\in\Gamma(u)$ be a regular point. There exists a constant $\lambda_\varepsilon(\alpha)$ such that
$$
\lim_{x\rightarrow x_0}\frac{u(x)}{\big((x-x_0)\cdot\nu(x_0)\big)_+^\alpha}=\lambda_\varepsilon(\alpha).
$$
\end{theorem}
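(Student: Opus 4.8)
The statement to prove is Theorem~\ref{t3.2}, the free boundary condition at regular points.

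\medskip

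\noindent\textbf{Proof proposal.}
The plan is to obtain the limit by a blow-up argument combined with the structural information already available from Theorem~\ref{t3.1}. First I would fix a regular free boundary point $x_0\in\Gamma(u)$ and, after a translation, assume $x_0=0$; by Definition~\ref{d3.1}, $\Gamma(u)$ has a tangent hyperplane at $0$, and I may rotate coordinates so that the inner normal is $e_n$. Consider the rescalings
$$
u_r(x,y):=\frac{1}{r^\alpha}u(rx,ry),
$$
which are local minimizers of $I_\varepsilon$ in balls of radius $1/r$ (the energy $I_\varepsilon$ scales correctly under this normalization, exactly as in the blow-up analysis inside the proof of Theorem~\ref{t3.1}). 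By optimal regularity (Theorem~\ref{t2.1}) the family $\{u_r\}$ is uniformly bounded in $C^{0,\alpha}_{\loc}$, and by non-degeneracy (Theorem~\ref{t2.2}, Lemma~\ref{l2.2}) it does not collapse to zero; hence along a subsequence $u_{r_k}\to u_0$ locally uniformly, with $u_0$ a global minimizer. Since $0$ is a regular point, the tangent hyperplane forces $\Omega_+(u_0)=\mathbb{R}^n_+=\{x_n>0\}$, and by Proposition~\ref{p1.1} (second part) one has $(-\Delta)^\alpha u_0(\cdot,0)=0$ in $\{x_n>0\}$, while $u_0=0$ on $\{x_n\le 0\}$.

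\medskip

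\noindent The next step is to identify $u_0$ explicitly. The function $v(x,y)$ obtained by extending $(x_n)_+^\alpha$ via \eqref{1.5} is, up to the known normalization constant, the unique (up to scaling) global $\alpha$-harmonic-in-the-extended-sense function vanishing on $\{x_n\le0\}$ with the homogeneity $|(x,y)|^\alpha$; concretely $(-\Delta)^\alpha (x_n)_+^\alpha = 0$ in $\{x_n>0\}$. I would then run exactly the oscillation/Harnack argument used at the end of the proof of Theorem~\ref{t3.1}: from $0<C_1 v\le u_0\le C_2 v$ (optimal regularity above, non-degeneracy below) and the oscillation lemma of \cite{FKS82} (see also Theorem~2.5 of \cite{CRS10}), the ratio $u_0/v$ has geometrically decaying oscillation at every dyadic scale; since $u_0$ and $v$ are global, iterating from radius $2^N r$ down to $r$ with $N\to\infty$ forces $u_0/v\equiv \mathrm{const}$. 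Thus $u_0(x,0)=\lambda\,(x_n)_+^\alpha$ for some $\lambda=\lambda_\varepsilon(\alpha)>0$. In fact, as noted in the proof of Theorem~\ref{t3.1}, matching with assertion~2 of that theorem identifies $\lambda_\varepsilon = q_\varepsilon(x_0)$.

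\medskip

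\noindent It remains to upgrade ``convergence of a subsequence of blow-ups'' to the genuine pointwise limit
$$
\lim_{x\to x_0}\frac{u(x)}{\big((x-x_0)\cdot\nu(x_0)\big)_+^\alpha}=\lambda_\varepsilon(\alpha).
$$
This is where assertion~4 of Theorem~\ref{t3.1} does the work: since $x_0$ is regular, the reduced boundary normal at $x_0$ is $\nu(x_0)$, and assertion~4 gives the expansion $u(x_0+x)=q_\varepsilon(x_0)\,(x\cdot\nu(x_0))_+^\alpha + o(|x|)$ as $x\to0$. Dividing by $(x\cdot\nu(x_0))_+^\alpha$ along the directions where this quantity is comparable to $|x|^\alpha$, and using the density estimates \eqref{3.1} (which confine the free boundary to a slab $|x\cdot\nu(x_0)|\le o(|x|)$) to control the remaining directions, yields the stated limit with $\lambda_\varepsilon(\alpha)=q_\varepsilon(x_0)$, in particular showing the limit is independent of the approach direction.

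\medskip

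\noindent The main obstacle I anticipate is the passage to the limit in the nonlocal equation along the blow-up sequence: unlike the classical case, $(-\Delta)^\alpha u_{r_k}(\cdot,0)=0$ in the positivity set is a statement about the extended functions, and one must check that local uniform convergence of $u_{r_k}$ (together with an energy/tail bound ensuring the $H^1(\beta,\cdot)$-weak convergence of the extensions and control of the nonlocal tails) is enough to conclude $(-\Delta)^\alpha u_0(\cdot,0)=0$ in $\{x_n>0\}$ and $u_0\equiv0$ on $\{x_n\le0\}$. This is handled by combining Proposition~\ref{p1.1}, the uniform positive-density estimates of Theorem~\ref{t2.3}, and the compactness of minimizers already used in the proof of Theorem~\ref{t2.3}. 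A secondary technical point is the uniqueness of the homogeneous global profile $(x_n)_+^\alpha$, which here is secured not by a Liouville-type theorem but by the oscillation-lemma iteration described above, so no new ingredient is needed beyond what Theorem~\ref{t3.1} already established.
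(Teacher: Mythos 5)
Your blow-up argument (rescale, compact in $C^{0,\alpha}_{\loc}$, non-degeneracy, pass to a global profile, identify the profile via the oscillation lemma) is essentially a replay of the proof of assertion~4 of Theorem~\ref{t3.1}, so you then close the loop by invoking assertion~4 itself. That part is fine as far as it goes, though logically redundant: if you are going to use assertion~4 at the end, you may as well start there (regular points in the sense of Definition~\ref{d3.1} lie in the reduced boundary, where the expansion holds).

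The genuine gap is that you never establish that the limiting slope is a \emph{single} constant, independent of the regular point $x_0$. Your argument delivers
$\lim_{x\to x_0}u(x)/\big((x-x_0)\cdot\nu(x_0)\big)_+^\alpha=q_\varepsilon(x_0)$,
and at that stage $q_\varepsilon(x_0)$ is a Borel function that a priori varies from point to point (that is exactly how it appears in assertion~2 of Theorem~\ref{t3.1}). Writing ``$\lambda=\lambda_\varepsilon(\alpha)$'' is a notational sleight of hand: nothing in the blow-up/oscillation iteration forces the slope at two different regular points to coincide. The paper's route to this theorem explicitly invokes Theorem~\ref{t4.1}, which says $q_\varepsilon\equiv\lambda_\varepsilon(\alpha)$ on $\Gamma^*(u)$; that constancy is proved in Section~\ref{sct HF} by the fractional Hadamard variational formula (an inward deformation at $x_1$ balanced against an outward deformation at $x_2$, comparing energies via \eqref{4.110}--\eqref{4.12}). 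Without this, or some substitute domain-variation argument, your proof establishes only a pointwise expansion with a point-dependent coefficient, which is weaker than the stated theorem. To repair it, cite Theorem~\ref{t4.1} (or reproduce the two-point Hadamard comparison) to upgrade $q_\varepsilon(x_0)$ to the universal constant $\lambda_\varepsilon(\alpha)$.
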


\section{A fractional Hadamard formula}\label{sct HF}

In this section we proof a fractional Hadamard's variational formula, which will provide a rate control upon volume decreasing versus energy increasing, for competing shapes.

Let $u$ be a local minimizer. We recall the notation $\Gamma^*(u)$ for its reduced free boundary. For given two points $x_1$, $x_2\in\Gamma^*(u)$, the idea is to make an inward perturbation around $x_1$, and outward perturbation around $x_2$ in such a way, that we do not change very much the original volume, and then compare the optimal configuration to the perturbated one in terms of the functional $I_\varepsilon$. We proceed as follows.

Let $\rho:\mathbb{R}\rightarrow\mathbb{R}$ be a nonnegative function from $C_0^\infty[0,1]$ with $\int\rho=1$. For any $r\in(0,\frac{dist(x_1,x_2)}{100})$ and $\gamma>0$, we define

\[ P_r(x,y):= \left\{
  \begin{array}{l l}
    (x,y)+\gamma r\rho\big(\frac{|(x-x_1,y)|}{r}\big)\nu(x_1,0), & \quad \text{when $(x,y)\in B_r(x_1,0)$}\\
    (x,y)-\gamma r\rho\big(\frac{|(x-x_2,y)|}{r}\big)\nu(x_2,0), & \quad \text{when $(x,y)\in B_r(x_2,0)$}\\
    (x,y) & \quad \textrm{elsewhere}.
  \end{array} \right.\]

If $v$ is any vector in $\mathbb{R}^{n+1}$, direct computations show that in $B_r(x_i,0)$
\begin{equation}\label{4.1}
DP_r(x,y)\cdot v=v+(-1)^{i+1}\bigg\{\gamma\rho'\bigg(\frac{|(x-x_i,y)|}{r}\bigg)\frac{\langle (x-x_i,y),v\rangle}{|(x-x_i,y)|}\bigg\}\nu(x_i,0),
\end{equation}
where $\langle\cdot,\cdot\rangle$ the the inner product in $\mathbb{R}^{n+1}$ and $i$ takes the values 1 and 2. Note, that if $\gamma$ is small enough, then $P_r$ is a diffeomorphism that maps $B_r(x_i,0)$ onto itself. Indeed, if $\displaystyle{\gamma\sup_{t\in[0,1]}}\rho'(t)<1$, then $P_r$ is a local injective diffeomorphism. Now, if $\gamma\rho(t)\leq1-t$, for $t\in[0,1]$,
$$
|P_r(x,y)-(x_i,0)|\leq|(x-x_i,y)|+\gamma r\rho\bigg(\frac{|(x-x_i,y)|}{r}\bigg)\leq r,
$$
for any $(x,y)\in B_r(x_i,0)$. Finally, note that $P_r=Id$ on $\partial B_r(x_i,0)$, therefore $P_r$ has to be onto.

For each $r>0$ small enough, we consider the $r$-perturbed configuration by
$$
v_r(P_r(x,y))=u(x,y).
$$
The idea is to compare our optimal configuration $\{u>0\}$ to its perturbation $\{v_r>0\}$ in terms of the penalized problem \eqref{1.7}. For any $r>0$ small enough and $i=1,2$, we consider the blow-up sequence $u_r^i:\,B_1(0)\rightarrow\mathbb{R}$ given by
$$
u_r^i(x,y):=\frac{1}{r^\alpha}u(x_i+rx,ry).
$$
From the blow-up analysis (see \cite{DP05}) we know that the set $B_1\cap\{u_r^i>0\}\cap\mathbb{R}^{n}$ approaches to
$\{(x,0)\in B_1,\,\,\langle(x,0)\cdot\nu(x_i,0)\rangle<0\}$, as $r\rightarrow0$.
In order to compute the change on the volume of the perturbation, we use the Change of Variables Theorem to obtain
\begin{eqnarray}\label{4.2}
&&\frac{\mathcal{L}^n\big(B_r(x_i,0)\cap\{v_r>0\}\cap\mathbb{R}^{n}\big)}{r^{n}}=\frac{1}{r^{n}}\int_{B_r(x_i,0)\cap\{v_r>0\}\cap\mathbb{R}^{n}}dz\nonumber\\
&=&\int_{B_1\cap\{v_r(x_i+rx,ry)>0\}\cap\mathbb{R}^{n}}dx\,dy\nonumber\\
&=&\int_{B_1\cap\{u_r^i(x_i+rx,ry)>0\}\cap\mathbb{R}^{n}}\det(DP_r(x_i+rx,ry))\,dx\,dy\\
&\rightarrow&\int_{B_1\cap\{\langle (x,0),\nu(x_i,0)\rangle<0\}\cap\mathbb{R}^n}1+(-1)^{i+1}\gamma\rho'(|(x,y)|)\bigg\langle\frac{(x,y)}{|(x,y)|},\nu(x_i,0)\bigg\rangle\,dx\,dy,\nonumber
\end{eqnarray}
as $r\rightarrow0$.
Note that there is a constant $C(\rho)$ such that for any unit vector $\nu\in\mathbb{S}^{n}$
\begin{equation}\label{4.3}
C(\rho)=\int_{B_1\cap\{\langle (x,0),\nu\rangle<0\}\cap\mathbb{R}^n}\rho'(|(x,y)|)\bigg\langle\frac{(x,y)}{|(x,y)|},\nu\bigg\rangle\,dx\,dy.
\end{equation}
A similar computation shows that
\begin{equation}\label{4.4}
\frac{\mathcal{L}^n\big(B_r(x_i,0)\cap\{u>0\}\cap\mathbb{R}^n\big)}{r^{n}}\rightarrow\int_{B_1\cap\{\langle (x,0)\cdot\nu(x_i,0)<0\rangle\}\cap\mathbb{R}^n}dx,
\end{equation}
as $r\rightarrow0$. From \eqref{4.2}, \eqref{4.3} and \eqref{4.4} we get
$$
\frac{1}{r^{n}}\bigg[\mathcal{L}^n\big(B_r(x_i,0)\cap
\{v_r>0\}\cap\mathbb{R}^n\big)-\mathcal{L}^n\big(B_r(x_i,0)
\cap\{u>0\}\cap\mathbb{R}^n\big)\bigg]\rightarrow0,
$$
as $r\rightarrow0$.
From the Lipschitz continuity of the penalization $f_\varepsilon$, we obtain
\begin{eqnarray}\label{4.5}
f_\varepsilon\big(\mathcal{L}^n\big(B_r(x_i,0)\cap\{v_r>0\}\cap\mathbb{R}^n\big)\big)&-&f_\varepsilon\big(\mathcal{L}^n\big(B_r(x_i,0)\cap\{u>0\}\cap\mathbb{R}^n\big)\big)\nonumber\\
&\leq&\frac{1}{\varepsilon}o(r^{n}).
\end{eqnarray}
Now we should check what happens with the integral part of the functional. Initially we observe that
\begin{eqnarray}\label{4.6}
&&\frac{1}{r^{n}}\int_{B_r(x_i,0)}|y|^\beta|\nabla u(x,y)|^2\,dx\,dy=\int_{B_1}|y|^\beta|\nabla u_r^i(x,y)|^2\,dx\,dy\nonumber\\
&=&\int_{B_1\cap\{u_r^i>0\}}|y|^\beta|\nabla u_r^i(x,y)|^2\,dx\,dy,
\end{eqnarray}
once $\Gamma^*(u_r^i)$ is smooth. Next we apply twice the Change of Variables Theorem and take into account that $P_r$ maps $B_r(x_i,0)$ diffeomorphically onto itself to write
\begin{eqnarray}\label{4.7}
&&\frac{1}{r^{n}}\int_{B_r(x_i,0)}|y|^\beta|\nabla v_r(x,y)|^2\,dx\,dy\nonumber\\
&&=\frac{1}{r^{n}}\int_{B_r(x_i,0)}|y|^\beta|DP_r(P_r^{-1}(x,y))^{-1}\cdot\nabla u(P_r^{-1}(x,y)|^2\,dx\,dy\nonumber\\
&&=\frac{1}{r^{n}}\int_{B_r(x_i,0)}|y|^\beta|DP_r(z,y)^{-1}\cdot\nabla u(z,y)|^2|\det\big(DP_r(z,y)\big)|\,dz\,dy\\
&&=\int_{B_1\cap\{u_r^i>0\}}|y|^\beta|DP_r(x_i+rh,ry)^{-1}\cdot\nabla u_r^i(h,y)|^2|\det\big(DP_r(x_i+rh,ry)\big)|\,dh\,dy\nonumber.
\end{eqnarray}
From \eqref{4.1}, using the fact that for any matrix $A$ with $|A|<1$,
$$
(Id+A)^{-1}=Id+\sum_{i=1}^\infty(-1)^iA^i,
$$
we have
\begin{eqnarray}\label{4.8}
&&DP_r(x_i+rh,ry)^{-1}\cdot\nabla u_r^i(h,y)\\
&=&\nabla u_r^i(h,y)-(-1)^{i+1}\gamma\frac{\rho'(|(h,y)|)}{|(h,y)|}\langle (h,y),\nabla u_r^i(h,y)\rangle\nu(x_i,0)+o(\gamma)\nonumber.
\end{eqnarray}
On the other hand,
\begin{equation}\label{4.9}
|\det\big(DP_r(x_i+rh,ry)\big)|=1+(-1)^{i+1}\gamma\frac{\rho'(|(h,y)|)}{|(h,y)|}\langle (h,y),\nu(x_i,0)\rangle.
\end{equation}
Combining \eqref{4.6}, \eqref{4.7}, \eqref{4.8} and \eqref{4.9}, we obtain
\begin{eqnarray}\label{4.10}
&&\frac{1}{r^{n}}\int_{B_r(x_i,0)}|y|^\beta\big[|\nabla v_r(x,y)|^2-|\nabla u(x,y)|^2\big]\,dx\,dy\nonumber\\
&=&(-1)^{i+1}\gamma\int_{B_1\cap\{u_r^i>0\}}|y|^\beta|\nabla u_r^i(h,y)|^2\frac{\rho'(|(h,y)|)}{|(h,y)|}\langle (h,y),\nu(x_i,0)\rangle\,dh\,dy\\
&+&(-1)^i2\gamma\int_{B_1\cap\{u_r^i>0\}}|y|^\beta\frac{\rho'(|(h,y)|)}{|(h,y)|}\langle (h,y),\nabla u_r^i(h,y)\rangle\langle \nabla u_r^i(h,y),\nu(x_i,0)\rangle\,dh\,dy\nonumber\\
&+&o(\gamma).\nonumber
\end{eqnarray}
Now we recall that in the proof of 4 of Theorem \ref{t3.1}, we verified that $u_r^i(h,y)=q_\varepsilon(x_i)(h_n)_+^\alpha+o(r)$, as $r\rightarrow0$, i.e. blow-up limits are one dimensional. Therefore, from the blow-up analysis, we have, as $r\rightarrow0$
$$
\int_{B_1\cap\{u_r^i>0\}}|y|^\beta|\nabla u_r^i|^2\rightarrow\alpha^2q_\varepsilon^2(x_i)\int_{B_1\cap\{\langle z,\nu_i\rangle<0\}}|y|^\beta|h_n|^{2(\alpha-1)}\,dh\,dy.
$$
Hence, letting $r\rightarrow0$ in \eqref{4.10} leads to
\begin{equation}\label{4.11}
\frac{1}{r^{n}}\int_{B_r(x_i,0)}|y|^\beta\big[|\nabla v_r|^2-|\nabla u|^2\big]\rightarrow(-1)^{i+1}\gamma\alpha^2q_\varepsilon^2(x_i)c(\rho)+o(\gamma),
\end{equation}
where
$$
c(\rho):=\lim_{r\rightarrow0}\int_{B_1\cap\{u_r^i>0\}}\frac{|y|^\beta}{|h_n|^{1+\beta}}\frac{\rho'(|(h,y)|)}{|(h,y)|}\langle (h,y),\nu(x_i,0)\rangle\,dh\,dy
$$
is a positive constant. To check that indeed, $0<c(\rho)<\infty$ is a positive constant, we argue as follows: since
$$
\Div\big(\rho(|z|)\big)=\frac{\rho'(|z|)}{|z|}\langle z,\nu(x_i,0)\rangle,
$$
so the divergence theorem together with the blow-up analysis provides
$$
\int_{B_1\cap\{u_r^i>0\}}\frac{\rho'(|z|)}{|z|}\langle z,\nu(x_i,0)\rangle\,dz\rightarrow\int_{B_1\cap\{\langle z,\nu(x_i,0)\rangle=0\}}\rho(|z|)\,d\mathcal{H}^{n-1}(z)=\text{const.}
$$
Recalling that the function $\rho$ is compactly supported in $[0,1]$, we conclude, that $0<c(\rho)<\infty$.  Returning to \eqref{4.11}, we can write
\begin{eqnarray}\label{4.110}
\int\limits_\Omega|y|^\beta[|\nabla v_r|^2-|\nabla u|^2]\,dx\,dy =r^n\gamma\alpha^2c(\rho)\big(q_\varepsilon^2(x_1)-q_\varepsilon^2(x_2)\big)+r^no(\gamma).
\end{eqnarray}
Combining \eqref{4.110} with the minimality property of $u$, we get
\begin{eqnarray}\label{4.12}
0\leq I_\varepsilon(v_r)-I_\varepsilon(u)&\leq& r^{n}\gamma\alpha^2 c(\rho)\big(q_\varepsilon^2(x_1)-q_\varepsilon^2(x_2)\big)\nonumber\\
&+&r^{n}o(\gamma)+\frac{1}{\varepsilon}o(r^{n}),
\end{eqnarray}
which gives after dividing by $r^{n}$ and letting $r\rightarrow0$
$$
0\leq\gamma\alpha^2 c(\rho)\big(q_\varepsilon^2(x_1)-q_\varepsilon^2(x_2)\big)+o(\gamma).
$$
Now dividing by $\gamma$ and letting $\gamma\rightarrow0$, and afterwards reversing the places of $x_1$ and $x_2$, we obtain
$$
q_\varepsilon(x_1)=q_\varepsilon(x_2).
$$
Since $x_1$ and $x_2$ were arbitrary in $\Gamma^*(u)$, we actually proved
\begin{theorem}\label{t4.1}
On the reduced free boundary, we have $q_\varepsilon\equiv\lambda_\varepsilon(\alpha)$.
\end{theorem}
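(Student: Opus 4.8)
The strategy is the domain-variation (Hadamard) argument already set up in this section: fix two points $x_1,x_2\in\Gamma^*(u)$ and compare the minimizer $u$ with the competitor $v_r:=u\circ P_r^{-1}$ obtained by pushing the positivity set inward near $x_1$ and outward near $x_2$ by equal amounts, through the diffeomorphism $P_r$ built from the fixed mollifier $\rho$. Since $P_r=\mathrm{Id}$ outside $B_r(x_1,0)\cup B_r(x_2,0)$, the function $v_r$ is admissible in \eqref{1.7}, so minimality gives $0\le I_\varepsilon(v_r)-I_\varepsilon(u)$; the entire point is to expand the right-hand side to first order in the scale $r$ and in the amplitude $\gamma$.

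For the penalization term I would use the Change of Variables Theorem together with the blow-up $u_r^i(x,y):=r^{-\alpha}u(x_i+rx,ry)$: since $B_1\cap\{u_r^i>0\}\cap\mathbb{R}^n$ converges to the half-ball $\{\langle(x,0),\nu(x_i,0)\rangle<0\}$, the first-order volume corrections near $x_1$ and near $x_2$ are opposite in sign and cancel, so the net volume change of $\{v_r>0\}$ is $o(r^n)$; by the Lipschitz bound on $f_\varepsilon$ its contribution to $I_\varepsilon(v_r)-I_\varepsilon(u)$ is $\tfrac1\varepsilon o(r^n)$. For the Dirichlet term I would again change variables, turning $\int_{B_r(x_i,0)}|y|^\beta|\nabla v_r|^2$ into an integral over $B_1\cap\{u_r^i>0\}$ of $|y|^\beta|DP_r^{-1}\nabla u_r^i|^2|\det DP_r|$, expand $(\mathrm{Id}+A)^{-1}$ and $\det$ to first order in $\gamma$ using \eqref{4.1}, and pass to the limit $r\to0$. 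The decisive input is assertion 4 of Theorem \ref{t3.1}: at a reduced free boundary point the blow-up limit is one-dimensional, $u_r^i\to q_\varepsilon(x_i)(h_n)_+^\alpha$, which identifies $\int_{B_1\cap\{u_r^i>0\}}|y|^\beta|\nabla u_r^i|^2\to\alpha^2q_\varepsilon^2(x_i)\int_{B_1\cap\{\langle z,\nu_i\rangle<0\}}|y|^\beta|h_n|^{2(\alpha-1)}$ and analogously handles the cross terms, yielding \eqref{4.110}: the energy change equals $r^n\gamma\alpha^2c(\rho)\big(q_\varepsilon^2(x_1)-q_\varepsilon^2(x_2)\big)+r^no(\gamma)$. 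One must also verify $0<c(\rho)<\infty$, which I would do by writing $\rho'(|z|)|z|^{-1}\langle z,\nu\rangle=\Div(\rho(|z|))$ and invoking the divergence theorem, the limit being $\int_{\{\langle z,\nu\rangle=0\}}\rho(|z|)\,d\mathcal{H}^{n-1}>0$ since $\rho\ge0$ is compactly supported in $[0,1]$ with $\int\rho=1$.

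Then I would close the argument. Plugging the two estimates into $0\le I_\varepsilon(v_r)-I_\varepsilon(u)$, dividing by $r^n$ and letting $r\to0$ gives $0\le\gamma\alpha^2c(\rho)\big(q_\varepsilon^2(x_1)-q_\varepsilon^2(x_2)\big)+o(\gamma)$; dividing by $\gamma$ and letting $\gamma\to0$ yields $q_\varepsilon^2(x_1)\ge q_\varepsilon^2(x_2)$, hence $q_\varepsilon(x_1)\ge q_\varepsilon(x_2)$ because $q_\varepsilon\ge0$. Interchanging the roles of $x_1$ and $x_2$ gives the reverse inequality, so $q_\varepsilon(x_1)=q_\varepsilon(x_2)$; as $x_1,x_2\in\Gamma^*(u)$ were arbitrary, $q_\varepsilon$ is constant on the reduced free boundary, and denoting this constant by $\lambda_\varepsilon(\alpha)$ — consistent with the free boundary condition of Theorem \ref{t3.2} — proves the claim.

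I expect the main obstacle to be the rigorous passage $r\to0$ in the energy expansion \eqref{4.10}: one must control the error terms coming from the change of variables uniformly in $r$, exploit that $\Gamma^*(u_r^i)$ flattens and $u_r^i$ converges to its one-dimensional profile in a topology strong enough to carry the $|y|^\beta$-weighted gradient integrals to the limit, and keep the $o(\gamma)$ and $o(r^n)$ remainders genuinely of lower order so the two parameters can be sent to zero in the correct order ($r$ first, then $\gamma$) without the remainders swamping the main term $\alpha^2c(\rho)\big(q_\varepsilon^2(x_1)-q_\varepsilon^2(x_2)\big)$; the auxiliary computation $0<c(\rho)<\infty$ via the divergence theorem is a secondary but necessary point.
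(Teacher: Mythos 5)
Your proposal reproduces, step by step, the Hadamard domain-variation argument that the paper carries out in Section \ref{sct HF} leading up to Theorem \ref{t4.1}: the same competitor $v_r=u\circ P_r^{-1}$, the same change-of-variables and blow-up analysis for both the penalization and Dirichlet terms, the same identification of the leading coefficient via the one-dimensionality of blow-ups (assertion 4 of Theorem \ref{t3.1}), the same divergence-theorem verification that $0<c(\rho)<\infty$, and the same final passage $r\to0$ then $\gamma\to0$ followed by swapping $x_1,x_2$. This is exactly the paper's proof, so there is nothing further to add.
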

Note also that \eqref{4.10} provides the Hadamard's variational formula:
\begin{equation}\label{4.13}
\int_\Omega|y|^\beta\big[|\nabla v_r|^2-|\nabla u|^2\big]=\lambda_\varepsilon^2(\alpha)V+o(V),
\end{equation}
where $V$ is the volume change.

\section{Recovering the original problem}\label{sct Rec}

Here we shall relate a solution to the penalized problem to a (possible) solution of our original problem. The idea is that the function $f_\varepsilon$ will charge a lot for those configurations that have a volume bigger than $\omega$. We hope that if the charge is too big optimal configuration of problem \eqref{1.7} will prefer to have volume $\omega$ than pay for the penalization.
\begin{proposition}\label{p5.1}
There exist $C,c>0$ constants such that
$$
0<c\leq\mathcal{L}^n(\{u_\varepsilon>0\}\cap\mathbb{R}^n\cap\Omega)\leq\omega+C\varepsilon,
$$
where $u_\varepsilon$ is a solution to \eqref{1.7}.
\end{proposition}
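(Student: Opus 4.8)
The plan is to treat the two inequalities separately: the upper bound is a soft comparison with a fixed feasible configuration (the ``no free lunch'' phenomenon), while the lower bound is the delicate one and requires quantifying how small the positive set of a minimizer can be.

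\textbf{Upper bound.} Write $u=u_\varepsilon$ for the minimizer and $m:=\mathcal L^n(\{u>0\}\cap\mathbb R^n\cap\Omega)$. Under the standing hypotheses on $\varphi$ and $\omega$ there is a fixed feasible function $w$ — agreeing with $u$ on the data — with $\mathcal L^n(\{w>0\}\cap\mathbb R^n\cap\Omega)\le\omega$ and finite Dirichlet integral; set $M_0:=\int_\Omega|y|^\beta|\nabla w|^2$, which depends only on $\varphi,\omega,\Omega$. Since $f_\varepsilon(s)\le 0$ for $s\le\omega$, minimality of $u$ gives
$$
\int_\Omega|y|^\beta|\nabla u|^2+f_\varepsilon(m)=I_\varepsilon(u)\le I_\varepsilon(w)\le M_0 .
$$
As the Dirichlet integral is nonnegative, $f_\varepsilon(m)\le M_0$. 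If $m\le\omega$ there is nothing to prove; if $m>\omega$ then $f_\varepsilon(m)=\varepsilon^{-1}(m-\omega)$, whence $m\le\omega+M_0\varepsilon$. Thus $C:=M_0$ works, and — this is the point that matters for the subsequent recovery of the volume constraint via the fractional Hadamard formula \eqref{4.13} — it does not depend on $\varepsilon$.

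\textbf{Lower bound.} Here one must exclude that the minimizer collapses to the configuration with empty positive set outside the data region. I would argue by contradiction, assuming $m$ as small as one likes. Pick $p\in\partial D$ with $\varphi(p)>0$ and test minimality against $v:=u+\eta$, where $\eta\ge 0$ is a fixed smooth profile rescaled to $B_\delta^n(p)\subset\{u=0\}$ with height of order $h$. Expanding the quadratic $\alpha$-energy, the cross term of $\eta$ with $u$ is $\lesssim -c\,h\,\delta^{\,n-2\alpha}$ — since $u=\varphi$ on $D$ and $u\equiv0$ elsewhere, this is governed by the nonlocal interaction of $\eta$ with $\{\varphi>0\}$, of order $\textrm{dist}(\cdot,\partial D)^{-2\alpha}$ — while the self-energy of $\eta$ is $\sim h^2\delta^{\,n-2\alpha}$ and the additional penalization is $O(\varepsilon\delta^n)$; choosing $h$ small and then $\delta$ small yields $I_\varepsilon(v)<I_\varepsilon(u)$, a contradiction, so $m>0$. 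To upgrade this to a $c$ \emph{independent of $\varepsilon$} I would feed back the uniform energy bound $\int_\Omega|y|^\beta|\nabla u|^2\le M_0+\varepsilon\omega$ from the first step together with a capacitary estimate: a configuration with $m$ small is forced to carry $\alpha$-energy at least of order $m^{(1-2\alpha)/n}$ when $\alpha>\tfrac12$ (a logarithm when $\alpha=\tfrac12$), because $u$ must fall from $\varphi>0$ to $0$ across a set of measure $\le m$ near $\partial D$; comparing with $M_0+\omega$ confines $m$ away from $0$. For $\alpha<\tfrac12$ this capacitary blow-up disappears, and one instead constructs an $\varepsilon$-independent subsolution barrier near $p$ forcing $u\ge$ const on a fixed ball contained in $\{u>0\}\cap\Omega$.

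The step I expect to be the main obstacle is precisely this $\varepsilon$-uniform lower bound: every quantitative tool available for $u_\varepsilon$ — the H\"older seminorm of Theorem \ref{t2.1}, the non-degeneracy constant of Theorem \ref{t2.2}, the density constants of Theorem \ref{t2.3} — blows up as $\varepsilon\to 0$, so it cannot be used, and the bound must be extracted from the single $\varepsilon$-free quantity $M_0$ through a trace/capacity inequality alone.
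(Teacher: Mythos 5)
Your \textbf{upper bound} is essentially the paper's: pick any fixed configuration $w$ agreeing with the data and with positivity set of measure $\le\omega$ (the paper builds it explicitly as the $\alpha$-harmonic function in an annulus $\Omega\setminus\Omega_*$ of measure $\omega$, vanishing inside $\Omega_*$), use $f_\varepsilon\le0$ on $[0,\omega]$ and minimality to get $f_\varepsilon(m)\le M_0$ with $M_0$ independent of $\varepsilon$, and unwind the definition of $f_\varepsilon$ on $(\omega,\infty)$. Same argument, same $\varepsilon$-uniform constant.

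For the \textbf{lower bound} you diverge from the paper, and the divergence makes the step harder than it needs to be. Your instinct at the end -- that the only $\varepsilon$-free ingredient available is the energy bound $\int_\Omega|y|^\beta|\nabla u_\varepsilon|^2\le M_0+\varepsilon\omega$ and that the conclusion must come out of a trace-type inequality -- is exactly right, and the paper does precisely that in one shot. It combines the weighted Poincar\'e inequality (the weight $|y|^\beta$ is $A_2$ for $\beta\in(-1,1)$) with a Cauchy--Schwarz / integration-along-layers estimate over a collar $E:=\Omega\cap B_\delta(\partial\Omega)$, arriving at
$$
\Bigl(\int_{\partial\Omega}\varphi\Bigr)^2\le C(\delta)\,\mathcal L^n\bigl(\{u_\varepsilon>0\}\cap\mathbb R^n\cap E\bigr)\int_E|y|^\beta\bigl[|\nabla u_\varepsilon|^2+|u_\varepsilon|^2\bigr],
$$
whose right factor is uniformly bounded, so $m\ge c>0$ with $c$ independent of $\varepsilon$. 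Because this is run entirely in the Caffarelli--Silvestre extension with the weighted Dirichlet energy, it is insensitive to the sign of $1-2\alpha$: no case split between $\alpha<\tfrac12$ and $\alpha\ge\tfrac12$, no separate barrier construction, and no need for a capacitary blow-up as $m\to0$ (the point is that $m$ multiplies a bounded energy, so one merely needs a positive trace, not divergence of energy).

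Your preliminary perturbation step ($v:=u+\eta$ with $\eta$ a bump in a ball $B^n_\delta(p)\subset\{u=0\}$, $p\in\partial D$ with $\varphi(p)>0$) is also not quite coherent as written: if $\varphi(p)>0$ and $u$ is continuous with $u=\varphi$ on $\overline D$, then $u>0$ in a neighborhood of $p$, so the ball you want to perturb inside cannot sit inside $\{u=0\}$ near $p$. One would have to move $p$ away from $\partial D$ and then the ``cross term'' heuristic $\sim -c\,h\,\delta^{n-2\alpha}$ needs a separation-dependent constant, which reintroduces geometry you have not controlled. In any case this whole step is superfluous: the trace inequality already gives the quantitative, $\varepsilon$-uniform lower bound directly, so there is nothing to ``upgrade.'' The gap in your proposal is therefore that the lower bound, which you correctly identified as the delicate half, is left at the level of a sketch with an avoidable dichotomy and a geometrically problematic perturbation, whereas a single weighted trace/Poincar\'e estimate settles it.
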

\begin{proof}
Let $\Omega_*$ be any smooth domain such that its complement contains $\Omega^c$ with $\mathcal{L}^n((\Omega_*^c\cap\mathbb{R}^n)\setminus(\Omega^c\cap\mathbb{R}^n))=\omega$.
From the minimality of $u_\varepsilon$ we have
\begin{equation}\label{5.1}
I_\varepsilon(u_\varepsilon,\Omega)=\int_\Omega|y|^\beta|\nabla u_\varepsilon|^2+f_\varepsilon(\mathcal{L}^n(\{u_\varepsilon>0\}\cap\mathbb{R}^n\cap\Omega))\leq I_\varepsilon(u_*,\Omega_*)=C,
\end{equation}
where $u_*$ is the $\alpha$-harmonic function in $\Omega_*^c\setminus\Omega^c$ taking "boundary data" equal to $\varphi$ in $\Omega^c$ and $0$ on $\partial\Omega_*^c$. Thus
$$
\frac{1}{\varepsilon}(\mathcal{L}^n(\{u_\varepsilon>0\}\cap\mathbb{R}^n\cap\Omega)-\omega)\leq f_\varepsilon(\mathcal{L}^n(\{u_\varepsilon>0\}\cap\mathbb{R}^n\cap\Omega)\leq C.
$$
This proves the estimate from above. In order to prove the estimate from below, we first note that since the weight $|y|^\beta$ is in the second Muckenhoupt class $A_2$ for $\beta\in(-1,1)$, we have a Poincar\'{e} inequality (see, for example, \cite{CRS10}), which together with \eqref{5.1} provides
\begin{equation}\label{5.2}
\int_\Omega|y|^\beta[|\nabla u_\varepsilon|^2+|u_\varepsilon|^2]\leq C,
\end{equation}
for some $C$ independent of $\varepsilon$. Recalling the fact that $u_\varepsilon$ takes values $\varphi$ outside of the domain, where it is $\alpha$-harmonic, recalling also the mean value inequality and \eqref{5.2}, we obtain (by integrating along layers with $E:=\Omega\cap B_\delta(\partial\Omega)$)
$$
\bigg(\int_{\partial\Omega}\varphi\bigg)^2\leq C(\delta)\mathcal{L}^n(\{u_\varepsilon>0\}\cap\mathbb{R}^n\cap E)\int_E|y|^\beta[|\nabla u_\varepsilon|^2+|u_\varepsilon|^2]
$$
the last integral being bounded uniformly in $\varepsilon$. Hence, the estimate from below is proved.
\end{proof}

\begin{lemma}\label{l5.1}
There exists $C>0$ depending on the domain and $\varphi$, but independent of $\varepsilon$, such that $\lambda_\varepsilon(\alpha)\leq C$.
\end{lemma}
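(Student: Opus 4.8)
The plan is to read off $\lambda_\varepsilon(\alpha)$ from the representation formula of Theorem \ref{t3.1} and to bound, uniformly in $\varepsilon$, the two ingredients that enter it: the weighted Dirichlet energy of $u_\varepsilon$ from above, and the $(n-1)$-dimensional measure of its free boundary from below. For the energy bound one compares $u_\varepsilon$ with the fixed competitor $u_*$ built in the proof of Proposition \ref{p5.1} (which has $\mathcal{L}^n(\{u_*>0\}\cap\Omega)=\omega$, hence $f_\varepsilon(u_*)=0$): minimality gives $\int_\Omega|y|^\beta|\nabla u_\varepsilon|^2\le I_\varepsilon(u_*)+\omega=:C_0$, a constant depending only on the domain and on $\varphi$; this is precisely estimate \eqref{5.2}. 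It is convenient to also record the pointwise bound $0\le u_\varepsilon\le w$, where $w$ is the $\alpha$-harmonic extension of $\varphi|_D$: this follows from the maximum principle for $(-\Delta)^\alpha$, since $(-\Delta)^\alpha u_\varepsilon\le 0$ on $D^c$ and $u_\varepsilon=w$ on $D$, so in particular $\|u_\varepsilon\|_{L^\infty}\le\|\varphi\|_{L^\infty}$.

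The heart of the argument is a lower bound $\mathcal{H}^{n-1}\big(\partial^*\{u_\varepsilon>0\}\cap K\big)\ge P_0>0$ with $P_0$ independent of $\varepsilon$, for a suitable fixed bounded set $K\supset\overline D$. Choosing $K$ large enough that $|K\setminus D|>\omega$, Proposition \ref{p5.1} forces $\mathcal{L}^n(\{u_\varepsilon=0\}\cap(K\setminus D))\ge|K\setminus D|-\omega-C\varepsilon$, which is bounded below for $\varepsilon$ small; together with the fact that the positivity set keeps a definite (uniform in $\varepsilon$) fraction of its volume inside $K\setminus D$, the relative isoperimetric inequality in $K$ yields
$$\mathcal{H}^{n-1}\big(\partial^*\{u_\varepsilon>0\}\cap K\big)\ \ge\ c_K\,\min\!\big(\mathcal{L}^n(\{u_\varepsilon>0\}\cap K),\ \mathcal{L}^n(\{u_\varepsilon=0\}\cap K)\big)^{\frac{n-1}{n}}\ \ge\ P_0>0 .$$
Since $u_\varepsilon$ is continuous and equals $\varphi$ on $D$, the essential boundary of $\{u_\varepsilon>0\}$ inside $K\setminus\overline D$ is exactly the reduced free boundary, so the left–hand side is controlled by (a cut-off version of) $\mathcal{H}^{n-1}\lfloor\partial^*\{u_\varepsilon>0\}$.

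Finally one combines the two bounds through Theorem \ref{t3.1}(2) and Theorem \ref{t4.1}. Fix once and for all a nonnegative $\psi\in C_0^\infty(\mathbb{R}^n\setminus\overline D)$ with $\psi\equiv 1$ on $K\setminus\overline D$; then, using $q_\varepsilon\equiv\lambda_\varepsilon(\alpha)$ on $\partial^*\{u_\varepsilon>0\}$ and that $\partial\{u_\varepsilon>0\}\setminus\partial^*\{u_\varepsilon>0\}$ is $\mathcal{H}^{n-1}$-negligible,
$$\lambda_\varepsilon(\alpha)\,P_0\ \le\ \lambda_\varepsilon(\alpha)\!\int_{\partial^*\{u_\varepsilon>0\}}\!\!\psi\,d\mathcal{H}^{n-1}\ =\ \Big|\int|y|^\beta\nabla u_\varepsilon\cdot\nabla\psi\Big|\ \le\ \Big(\int|y|^\beta|\nabla u_\varepsilon|^2\Big)^{1/2}\Big(\int|y|^\beta|\nabla\psi|^2\Big)^{1/2}\ \le\ C_0^{1/2}\,\|\nabla\psi\|_{L^2(|y|^\beta)},$$
whence $\lambda_\varepsilon(\alpha)\le C_0^{1/2}\|\nabla\psi\|_{L^2(|y|^\beta)}/P_0$, a bound depending only on the domain and on $\varphi$. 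The main obstacle is the geometric input of the second step — showing that a uniformly positive fraction of the free boundary stays inside a fixed compact set — which amounts to ruling out that the positivity region escapes to infinity or collapses onto $\partial D$ as $\varepsilon\to 0$; this can be done by a truncation/comparison argument in the spirit of \cite{AC81}, using the $L^\infty$-bound and the energy bound above. An alternative route, should a Weiss-type monotonicity formula for \eqref{1.7} be available (cf. \cite{CRS10, DP05}), is to bound $\lambda_\varepsilon(\alpha)^2$ directly by $r_0^{-n}\int_{B_{r_0}(x_0)}|y|^\beta|\nabla u_\varepsilon|^2\le C_0\,r_0^{-n}$ at a fixed interior regular free boundary point $x_0$ and a fixed radius $r_0$, bypassing the isoperimetric estimate altogether.
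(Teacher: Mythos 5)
Your proposal uses exactly the ingredients the paper points to for this lemma — Proposition~\ref{p5.1}, the isoperimetric inequality, and Theorem~\ref{t3.1}(2) combined with Theorem~\ref{t4.1} — and the chain
$\lambda_\varepsilon P_0 \le \lambda_\varepsilon\int_{\partial^*\{u_\varepsilon>0\}}\psi\,d\mathcal{H}^{n-1}
= \big|\int|y|^\beta\nabla u_\varepsilon\cdot\nabla\psi\big|
\le C_0^{1/2}\|\nabla\psi\|_{L^2(|y|^\beta)}$
is correct, with \eqref{5.2} providing the uniform energy bound. The step you flag as the ``main obstacle'' is in fact already available from the proof of Proposition~\ref{p5.1}: the integration-along-layers argument there produces a \emph{fixed} bounded neighborhood $E=\Omega\cap B_\delta(\partial D)$ of $\partial D$, independent of $\varepsilon$, with $\mathcal{L}^n(\{u_\varepsilon>0\}\cap E)\ge c'>0$ uniformly in $\varepsilon$; so taking $K$ a ball with $\overline D\cup E\subset K$ and $\mathcal{L}^n(K\setminus D)>\omega+1$, and applying the relative isoperimetric inequality in $K\setminus\overline D$ (not in $K$, so that the reduced boundary counted is genuine free boundary; alternatively note that when $\varphi>0$ on $D$ and $u_\varepsilon$ is continuous, $\partial^*\{u_\varepsilon>0\}\cap K\subset K\setminus\overline D$ anyway), one gets $P_0=c\min(c',1)^{(n-1)/n}>0$ uniformly for $\varepsilon$ small. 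No escape-to-infinity or collapse argument is needed beyond that. The $L^\infty$ comparison $u_\varepsilon\le w$ is a correct observation but is not used in the final chain. In short: the proposal is sound and essentially identical in structure to the paper's (referenced) argument, with the only unfinished step already closed by Proposition~\ref{p5.1}'s proof.
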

The proof of this lemma is a consequence of Proposition \ref{p5.1}, isoperimetric inequality and 2nd assertion of the Theorem \ref{t3.1} (see the proof of the corresponding result in \cite{AAC86}, \cite{BMW06}, \cite{DP05}, or \cite{T10}).
\begin{lemma}\label{l5.2}
There exists $c>0$ depending on the domain and $\varphi$ but independent of $\varepsilon$ such that $c\leq\lambda_\varepsilon(\alpha)$.
\end{lemma}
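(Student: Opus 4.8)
The plan is to establish the lower bound $c \le \lambda_\varepsilon(\alpha)$ by a contradiction argument combined with a blow-up analysis, mirroring the classical reasoning used for one-phase Bernoulli-type problems but adapted to the fractional setting via the Caffarelli--Silvestre extension. The key point is that a very small value of $\lambda_\varepsilon(\alpha)$ would force the minimizer to behave almost like a nonnegative $\alpha$-harmonic function near the free boundary, which contradicts non-degeneracy; crucially, the non-degeneracy constant $C_0$ from Theorem~\ref{t2.2} and Lemma~\ref{l2.2} must be controlled \emph{independently} of $\varepsilon$ for the argument to close, and this independence has to come from the uniform energy bound \eqref{5.2} in Proposition~\ref{p5.1} together with the fact that the competitor constructed there is $\varepsilon$-independent.

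First I would suppose, for contradiction, that there is a sequence $\varepsilon_k \to 0$ (or simply a sequence of values of the parameter) with $\lambda_{\varepsilon_k}(\alpha) \to 0$, and pick free boundary points $x_k \in \Gamma^*(u_{\varepsilon_k})$. By Theorem~\ref{t4.1} we know $q_{\varepsilon_k} \equiv \lambda_{\varepsilon_k}(\alpha)$ on the reduced free boundary, and by assertion~4 of Theorem~\ref{t3.1} the asymptotic expansion $u_{\varepsilon_k}(x_k + x, 0) = \lambda_{\varepsilon_k}(\alpha)(x\cdot\nu(x_k))_+^\alpha + o(|x|)$ holds. Next I would perform the blow-up $u_k(x,y) := r_k^{-\alpha} u_{\varepsilon_k}(x_k + r_k x, r_k y)$ at an appropriate scale $r_k$, using the uniform H\"older estimate (Theorem~\ref{t2.1}, whose constant is $\varepsilon$-independent once we have the uniform energy bound \eqref{5.2}) and the uniform non-degeneracy to extract a limit $u_\infty$ which is a nonnegative global solution of $\Div(|y|^\beta \nabla u_\infty) = 0$ off $\{y=0\}$, vanishes at the origin, is $\alpha$-harmonic in its positivity set, and has a one-dimensional profile by the argument in the proof of assertion~4 of Theorem~\ref{t3.1}. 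Then the amplitude of $u_\infty$ is comparable to $\lim_k \lambda_{\varepsilon_k}(\alpha)$ along this scaling, while non-degeneracy (Lemma~\ref{l2.2}, with $\varepsilon$-independent constant) forces it to be bounded below by a fixed positive constant --- a contradiction.

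An alternative, and arguably cleaner, route is the direct variational one: take the minimizer $u_\varepsilon$ and a free boundary point, and compare $u_\varepsilon$ with the competitor $\tilde u$ obtained by replacing $u_\varepsilon$ on a small ball $B_r$ with its $|y|^\beta$-harmonic replacement (which is strictly positive inside, enlarging the positivity set by $\sim c r^n$). Minimality gives
$$
\int_{B_r}|y|^\beta|\nabla u_\varepsilon|^2 - \int_{B_r}|y|^\beta|\nabla \tilde u|^2 \le f_\varepsilon\big(\mathcal{L}^n(\{\tilde u>0\})\big) - f_\varepsilon\big(\mathcal{L}^n(\{u_\varepsilon>0\})\big) \le \frac{1}{\varepsilon} c' r^n,
$$
and by the Hadamard-type formula \eqref{4.13}, or by a direct computation as in \cite{AC81,DP05}, the left-hand side is bounded below by $c\,\lambda_\varepsilon^2(\alpha) r^{n} - o(r^n)$. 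This alone only yields $\lambda_\varepsilon^2(\alpha) \lesssim 1/\varepsilon$, which is the wrong direction; to get the genuine lower bound one must instead exploit that the \emph{global} energy of $u_\varepsilon$ is bounded below away from zero (because $\varphi$ is a fixed nontrivial boundary datum and the positivity set has measure bounded below by Proposition~\ref{p5.1}), and combine this with the representation $\int_\Omega |y|^\beta|\nabla u_\varepsilon|^2 = \int_{\Gamma^*} q_\varepsilon\, u_\varepsilon\, d\mathcal{H}^{n-1} = \lambda_\varepsilon(\alpha)\int_{\Gamma^*} u_\varepsilon\, d\mathcal{H}^{n-1}$ from assertion~2 of Theorem~\ref{t3.1}, so that $\lambda_\varepsilon(\alpha)$ cannot be too small without forcing the energy to vanish.

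The main obstacle, in either approach, is tracking the $\varepsilon$-dependence of all the constants: the H\"older, non-degeneracy, and density constants from Section~\ref{stc P} were only asserted to be $\varepsilon$-independent modulo the uniform bound \eqref{5.2}, so the proof must carefully invoke Proposition~\ref{p5.1} (and the fixed $\varepsilon$-independent competitor $u_*$ built there) to legitimize the compactness step. Once that uniformity is in hand, the contradiction/blow-up argument goes through essentially as in the local references \cite{AAC86,BMW06,DP05,T10}, with the extension machinery of \cite{CS07} and the one-dimensional symmetry of blow-ups (already proved in Theorem~\ref{t3.1}) replacing the classical linear-growth blow-up analysis.
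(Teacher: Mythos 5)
Your proposal does not match the paper's proof, and both routes you sketch have genuine gaps.

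The paper's argument is a direct barrier comparison. It picks a point $z_1$ with $u_\varepsilon(z_1)>0$ for all $\varepsilon$, dilates balls $\Gamma_t=B_{\delta/2+t}(z_1)\cap\Omega$ until they first touch $\partial\{u_\varepsilon>0\}$ at some $x_0$, and builds the $\alpha$-harmonic function $\psi_\varepsilon$ in the annulus $\Gamma_{t_\varepsilon}\setminus\Gamma_0$ with datum $\min\varphi$ on the inner sphere and $0$ outside. The geometry of this annulus and the inner datum are uniformly controlled (the touching radius $t_\varepsilon$ lies in a compact range, $\min\varphi$ is fixed), so the fractional Hopf lemma yields $\psi_\varepsilon(x)\ge c\big((x-x_0)\cdot\nu(x_0)\big)^\alpha$ with $c$ independent of $\varepsilon$. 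The maximum principle gives $u_\varepsilon\ge\psi_\varepsilon$, and comparing with the expansion $u_\varepsilon(x)=\lambda_\varepsilon(\alpha)\big((x-x_0)\cdot\nu(x_0)\big)_+^\alpha+o(|x-x_0|)$ at $x_0$ immediately forces $\lambda_\varepsilon(\alpha)\ge c$. The whole point is that the uniform-in-$\varepsilon$ information is supplied by a fixed barrier, not by the regularity constants of the minimizer.

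Your first route (blow-up plus non-degeneracy) hinges on the non-degeneracy constant in Theorem~\ref{t2.2} and Lemma~\ref{l2.2} being $\varepsilon$-independent. The paper explicitly warns otherwise (``We remark that the constant $C_0$ may depend on $\varepsilon$''), and for good reason: in the non-degeneracy proof the volume gain from truncating $u$ in a small ball is only rewarded by $f_\varepsilon$ at rate $\varepsilon$ (when below $\omega$), so the resulting $C_0$ genuinely degenerates. The uniform energy bound \eqref{5.2} does not repair this; \eqref{5.2} controls the Dirichlet energy, not the Lipschitz slope of $f_\varepsilon$, which is where the $\varepsilon$-dependence actually enters. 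Your second route invokes the identity $\int_\Omega|y|^\beta|\nabla u_\varepsilon|^2=\lambda_\varepsilon(\alpha)\int_{\Gamma^*}u_\varepsilon\,d\mathcal{H}^{n-1}$, but assertion~2 of Theorem~\ref{t3.1} holds only for $\psi\in C_0^\infty(\Omega)$, so plugging in $u_\varepsilon$ itself is not legitimate: a nontrivial boundary term on $\partial\Omega$ (or $\partial B_1$) is unaccounted for, and the asserted identity, as written, is not established. Neither sketch, as given, closes the argument; the Hopf-lemma barrier is what makes the $\varepsilon$-independence transparent, and that ingredient is missing from your proposal.
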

\begin{proof}
As in the above mentioned references, the proof is based on the Hopf's Lemma, which is true also for the fractional Laplacian case (see \cite{B97} or Proposition 2.7 in \cite{CRS10}).

In fact, let $z_1\in\Omega$ be such that $u_\varepsilon(z_1)>0$ for all $\varepsilon>0$. Let $\delta:=dist(z_1,\partial\Omega)$. We then consider the smooth family of domains $\Gamma_t:=B_{\frac{\delta}{2}+t}(z_1)\cap\Omega$. Let $t_\varepsilon$ be the first $t$ such that $\Gamma_t$ touches $\partial\{u_\varepsilon>0\}$. Let $x_0$ be that touching point. Define $\psi_\varepsilon$ to be $\alpha$-harmonic in $\Gamma_{t_\varepsilon}\setminus\Gamma_0$, with the following "boundary data":
$$
\psi_\varepsilon|_{\partial\Gamma_0}=\min\varphi\,\,\,\,\textrm{ and }\,\,\,\,\psi_\varepsilon|_{\Gamma_{t_\varepsilon}^c}=0.
$$
By the maximum principle we have $u_\varepsilon\geq\psi_\varepsilon$ in $\Gamma_{t_\varepsilon}\setminus\Gamma_0$. From the (generalized) Hopf's Lemma we also know that there exists a constant $c>0$ depending only on the domain and $\varphi$, but independent of $\varepsilon$, such that
\begin{equation}\label{5.3}
\psi_\varepsilon(x)\geq c((x-x_0)\cdot\nu(x_0))^\alpha.
\end{equation}
On the other hand we have the following asymptotic development around $x_0$
\begin{equation}\label{5.4}
\psi_\varepsilon(x)\leq u_\varepsilon(x)=\lambda_\varepsilon(\alpha)((x-x_0)\cdot\nu(x_0))^\alpha+o(|x-x_0|).
\end{equation}
Letting $x\rightarrow x_0$ in \eqref{5.4} and taking into account \eqref{5.3}, we obtain
$$
c\leq\lambda_\varepsilon(\alpha)
$$
as desired.
\end{proof}
Now we are ready to prove the main theorem of this section.
\begin{theorem}\label{t5.1}
If $\varepsilon$ is small enough, then any solution of \eqref{1.7} is a solution of $(1.1)$.
\end{theorem}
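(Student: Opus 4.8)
The plan is to argue by contradiction on the volume. Suppose there were a sequence $\varepsilon_k\to 0$ and minimizers $u_{\varepsilon_k}$ of the penalized problem \eqref{1.7} whose positivity volume $V_k:=\mathcal{L}^n(\{u_{\varepsilon_k}>0\}\cap\mathbb{R}^n\cap\Omega)$ never equals $\omega$. By the upper bound in Proposition \ref{p5.1}, $V_k\le\omega+C\varepsilon_k$, so the only bad case that can persist is $V_k<\omega$ (the case $V_k>\omega$ is ruled out below by the structure of $f_\varepsilon$, since on $\{s<\omega\}$ the penalization has the tiny slope $\varepsilon$, which is precisely what will let volume be gained cheaply). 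So assume $V_k<\omega$ for all $k$. The strategy is then to use the fractional Hadamard formula of Section \ref{sct HF} to produce a competitor whose volume is strictly larger, at an energy cost that is quadratically small in the volume increment, and show this beats $u_{\varepsilon_k}$ for $k$ large.

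Here are the steps in order. First, fix two points $x_1,x_2\in\Gamma^*(u_{\varepsilon_k})$ on the reduced free boundary (nonempty by Theorem \ref{t3.1}). Run the perturbation $P_r$ of Section \ref{sct HF}, but now making an \emph{outward} deformation at both $x_1$ and $x_2$ (i.e. choose the signs so that both local volumes increase), rather than the volume-balancing inward/outward pair used to prove Theorem \ref{t4.1}. Let $V=V(r,\gamma)>0$ be the total volume gained; from \eqref{4.2}--\eqref{4.4} and \eqref{4.13}, the Dirichlet energy changes by $\lambda_{\varepsilon_k}^2(\alpha)\,V+o(V)$. Second, estimate the change in the penalization term: since $V_k<\omega$ and we only add volume by an amount $V$ which we keep $\le\omega-V_k$, we stay in the regime $s\le\omega$ where $f_{\varepsilon_k}$ has slope $\varepsilon_k$, so $f_{\varepsilon_k}(V_k+V)-f_{\varepsilon_k}(V_k)=\varepsilon_k V$. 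Third, combine: the competitor $v_r$ satisfies
\[
I_{\varepsilon_k}(v_r)-I_{\varepsilon_k}(u_{\varepsilon_k})\le \lambda_{\varepsilon_k}^2(\alpha)\,V+o(V)+\varepsilon_k V.
\]
This is $\ge 0$ by minimality, so dividing by $V$ and letting $V\to 0$ gives $\lambda_{\varepsilon_k}^2(\alpha)+\varepsilon_k\ge 0$, which is no contradiction yet — the point is that this only says outward deformation is not profitable, consistent with $\lambda_{\varepsilon_k}>0$. So one must instead compare with a genuinely \emph{global} competitor.

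The right comparison is: against the original minimizer $u_{\varepsilon_k}$ put a competitor $w_k$ that has volume exactly $\omega$, built by perturbing $u_{\varepsilon_k}$ outward to add the deficit $\omega-V_k$. Using the Hadamard formula \eqref{4.13} iteratively (or in one shot over a collection of free boundary balls), the Dirichlet energy of $w_k$ exceeds that of $u_{\varepsilon_k}$ by at most $\lambda_{\varepsilon_k}^2(\alpha)(\omega-V_k)+o(\omega-V_k)\le C(\omega-V_k)$, using the uniform bound $\lambda_{\varepsilon_k}(\alpha)\le C$ from Lemma \ref{l5.1}. Meanwhile $f_{\varepsilon_k}(\omega)-f_{\varepsilon_k}(V_k)=\varepsilon_k(\omega-V_k)$. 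Hence $I_{\varepsilon_k}(w_k)\le I_{\varepsilon_k}(u_{\varepsilon_k})+(C+\varepsilon_k)(\omega-V_k)$, and minimality forces $0\le (C+\varepsilon_k)(\omega-V_k)$, again vacuous. The genuine gain must come from comparing $u_{\varepsilon_k}$ against a competitor of volume exactly $\omega$ that \emph{also} lowers the Dirichlet energy by a definite amount — namely, take a fixed reference configuration $u_*$ of volume $\omega$ (as in \eqref{5.1}) and note $I_{\varepsilon_k}(u_*)=\int|y|^\beta|\nabla u_*|^2=:E_*$ is independent of $\varepsilon_k$; comparing, $\int|y|^\beta|\nabla u_{\varepsilon_k}|^2+\varepsilon_k(V_k-\omega)\le E_*$, so $\int|y|^\beta|\nabla u_{\varepsilon_k}|^2\le E_*+\varepsilon_k\omega\le E_*+1$. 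Now perturb $u_{\varepsilon_k}$ outward to volume $\omega$ getting $w_k$ with $\int|y|^\beta|\nabla w_k|^2\le \int|y|^\beta|\nabla u_{\varepsilon_k}|^2+C(\omega-V_k)$ and $\mathcal{L}^n(\{w_k>0\})=\omega$, so $f_{\varepsilon_k}(\cdot)$-term vanishes; minimality of $u_{\varepsilon_k}$ gives
\[
\int_\Omega|y|^\beta|\nabla u_{\varepsilon_k}|^2-\varepsilon_k(\omega-V_k)\le \int_\Omega|y|^\beta|\nabla w_k|^2\le\int_\Omega|y|^\beta|\nabla u_{\varepsilon_k}|^2+C(\omega-V_k),
\]
still consistent. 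The actual contradiction, as in \cite{AAC86,BMW06,DP05,T10}, comes from the \emph{lower} bound $\lambda_{\varepsilon_k}(\alpha)\ge c>0$ of Lemma \ref{l5.2}: one shows that removing volume is cheap when $V_k>\omega$ (excluded) and adding volume near a free boundary point \emph{strictly decreases} $I_{\varepsilon_k}$ when the deformation is chosen to also reduce energy using that $\lambda_{\varepsilon_k}$ is bounded below — so if $V_k<\omega$ one can strictly decrease $I_{\varepsilon_k}$, contradicting minimality, once $\varepsilon_k$ is small enough that $\varepsilon_k<c^2\alpha^2 c(\rho)$, i.e. the marginal penalization slope $\varepsilon_k$ is below the marginal energy gain $\lambda_{\varepsilon_k}^2(\alpha)\alpha^2c(\rho)\ge c^2\alpha^2c(\rho)$ of an outward deformation.

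Concretely: pick $x_0\in\Gamma^*(u_{\varepsilon_k})$, and use the one-point outward version of the $P_r$ perturbation; by \eqref{4.11} the Dirichlet energy changes by $-\gamma\alpha^2\lambda_{\varepsilon_k}^2(\alpha)c(\rho)\,r^n+o(\gamma r^n)$ (outward deformation lowers the Dirichlet integral since the gradient concentrates on the old side), while the volume increases by $\sim\gamma C(\rho) r^n>0$, costing $\varepsilon_k\gamma C(\rho)r^n$ in penalization because $V_k<\omega$. So $I_{\varepsilon_k}(v_r)-I_{\varepsilon_k}(u_{\varepsilon_k})\le\gamma r^n\big(\varepsilon_k C(\rho)-\alpha^2 c^2 c(\rho)\big)+o(\gamma r^n)+\tfrac1{\varepsilon_k}o(r^n)$, using $\lambda_{\varepsilon_k}(\alpha)\ge c$ from Lemma \ref{l5.2}. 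Choosing $\varepsilon_k$ small so the bracket is negative, then $r$ small, then $\gamma$ small, makes the right side strictly negative, contradicting $I_{\varepsilon_k}(u_{\varepsilon_k})\le I_{\varepsilon_k}(v_r)$. This proves $V_k=\omega$ for all small $\varepsilon_k$; combined with the upper constraint $u_{\varepsilon_k}=\varphi$ in $D$ built into $K$ (so $u_{\varepsilon_k}\in K(\alpha,\omega,\varphi)$ once the volume is right) and the fact that on $\{s=\omega\}$ the penalization $f_{\varepsilon}$ is stationary (so $J_\varepsilon=J_\alpha$ there), $u_{\varepsilon_k}$ minimizes $J_\alpha$ over $K(\alpha,\omega,\varphi)$, i.e. solves $(1.1)$. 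The main obstacle is the sign bookkeeping in \eqref{4.11}: one must verify that the \emph{outward} (volume-increasing) one-point deformation genuinely decreases the Dirichlet energy at rate $\alpha^2\lambda_{\varepsilon_k}^2(\alpha)c(\rho)$ with $c(\rho)>0$, since the whole argument hinges on the strict inequality $\varepsilon_k C(\rho)<\alpha^2\lambda_{\varepsilon_k}^2(\alpha)c(\rho)$ for $\varepsilon_k$ small, which uses the uniform lower bound on $\lambda_{\varepsilon_k}(\alpha)$.
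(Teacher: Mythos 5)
Your treatment of the case $S:=\mathcal{L}^n(\{u_\varepsilon>0\}\cap\mathbb{R}^n\cap\Omega)<\omega$ is essentially the paper's argument: perturb outward at a reduced free boundary point, use the Hadamard formula \eqref{4.13} together with Theorem \ref{t4.1} to estimate the Dirichlet-energy change by $-\lambda_\varepsilon^2(\alpha)V+o(V)$, use the lower bound $\lambda_\varepsilon(\alpha)\ge c$ from Lemma \ref{l5.2}, note that the penalization only charges $\varepsilon V$ in the regime $s<\omega$, and conclude that for $\varepsilon<c^2$ the competitor strictly beats $u_\varepsilon$. Your flagging of the sign bookkeeping in \eqref{4.11} is fair given the paper's ambiguous normal-orientation conventions, but the usage in \eqref{5.6} pins down the intended sign and your conclusion is the right one.

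The real gap is that you never handle the case $S>\omega$, and your reason for dismissing it is wrong on two counts. First, Proposition \ref{p5.1} only gives $S\le\omega+C\varepsilon$, which does not preclude $S>\omega$; since Theorem \ref{t5.1} requires $S=\omega$ exactly, even $S=\omega+C\varepsilon$ would be a counterexample. Second, your parenthetical justification (``on $\{s<\omega\}$ the penalization has the tiny slope $\varepsilon$'') is a statement about the regime $s<\omega$ and has no bearing on $S>\omega$. The paper disposes of $S>\omega$ separately and it is the \emph{other} half of the penalization that does the work there: perform an \emph{inward} perturbation shrinking volume by $V$, so that \eqref{5.5} gives a penalization saving of $\tfrac{1}{\varepsilon}V$ (steep slope $1/\varepsilon$ on $s>\omega$), while by Theorem \ref{t4.1}, \eqref{4.13} and Lemma \ref{l5.1} the Dirichlet energy increases by at most $\lambda_\varepsilon^2(\alpha)V+o(V)\le C^2V+o(V)$; minimality then forces $\varepsilon\ge 1/C^2$, a contradiction for small $\varepsilon$. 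Note this direction needs the \emph{upper} bound on $\lambda_\varepsilon(\alpha)$ (Lemma \ref{l5.1}), not the lower bound (Lemma \ref{l5.2}), which is another reason the two cases cannot be collapsed into one. Adding this inward-perturbation argument would complete the proof; as written it is incomplete.

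A minor stylistic point: the two intermediate ``this is vacuous'' detours (one-point sign check, global competitor $w_k$) are not part of the proof and obscure the main line; they should be cut.
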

\begin{proof}
Basically, we just need to show that $S:=\mathcal{L}^n(\{u_\varepsilon>0\}\cap\mathbb{R}^n\cap\Omega)=\omega$, for $\varepsilon$ small enough.

Suppose $S>\omega$. In the spirit of the previous section, we consider an inward perturbation of the positivity set of $u_\varepsilon$ with the volume change $V$, such that for the new function $\tilde{u}_\varepsilon$ we still have $\mathcal{L}^n(\{\tilde{u}_\varepsilon>0\}\cap\mathbb{R}^n\cap\Omega)>\omega$. Thus
\begin{equation}\label{5.5}
f_\varepsilon\big(\mathcal{L}^n(\{\tilde{u}_\varepsilon>0\}\cap\mathbb{R}^n\cap\Omega)\big)
-f_\varepsilon\big(\mathcal{L}^n(\{u_\varepsilon>0\}\cap\mathbb{R}^n\cap\Omega)\big)=-\frac{1}{\varepsilon}V.
\end{equation}
From Theorem \ref{t4.1} and Lemma \ref{l5.1} we have
\begin{eqnarray}\label{5.6}
\int_\Omega|y|^\beta[|\nabla\tilde{u}_\varepsilon|^2-|\nabla u_\varepsilon|^2]&=&\lambda_\varepsilon^2(\alpha)V+o(V)\nonumber\\
&\leq&C^2V+o(V).
\end{eqnarray}
Using the fact that $I_\varepsilon(u_\varepsilon,\Omega)\leq I_\varepsilon(\tilde{u}_\varepsilon,\Omega)$, from \eqref{5.5} and \eqref{5.6} we get
$$
0\leq C^2V+o(V)-\frac{1}{\varepsilon}V,
$$
therefore (by dividing on $V$ and letting $V\rightarrow0$) it provides us  with
$$
\varepsilon\geq\frac{1}{C^2},
$$
which is a contradiction, when $\varepsilon$ is small enough. Hence $S\leq\omega$, for small $\varepsilon$. If $S<\omega$, arguing the same way and using Lemma \ref{l5.2}, we obtain another lower bound for $\varepsilon$. Thus, when $\varepsilon$ is small enough, $S=\omega$.
\end{proof}

\end{document}